\newcommand{\br}{\mathbb R}
\newcommand{\nc}{\newcommand}
\newtheorem{lemma}{Lemma}[section]
\newtheorem{theorem}{Theorem}[section]
\newtheorem{problem}{Problem}[section]
\newtheorem{definition}{Definition}[section]
\nc{\cD}{{\cal D}}
\nc{\cP}{{\cal P}}
\nc{\cR}{{\cal R}}
\nc{\e}{\varepsilon}
\nc{\Om}{\Omega}
\nc{\om}{\omega}
\nc{\aal}{\alpha}
\nc{\tow}{\rightharpoonup}
\nc{\nin}{\in \hs{-.35}/\,}
\nc{\np}{\newpage}
\nc{\g}{\gamma}
\nc{\IN}{I \hs{-.15} N}
\nc{\IR}{I \hs{-.14} R}
\nc{\IK}{I \hs{-.14} K}
\nc{\hs}[1]{\hspace{#1cm}}
\def\C#1{{\mathcal {#1}}}
\title{\bf Attractors for Navier-Stokes flows \\with multivalued and nonmonotone subdifferential boundary conditions}
\author{Piotr Kalita
    \thanks{E-mail : piotr.kalita@ii.uj.edu.pl}
    \thanks{The research was supported by the Marie Curie International Research Staff
Exchange Scheme Fellowship within the 7th European Community Framework
Programme under Grant Agreement No. 295118, by the National Science
Center of Poland under grant no. N N201 604640, and by the International
Project co-financed by the Ministry of Science and Higher Education of
Republic of Poland under grant no. W111/7.PR/2012.},
Grzegorz {\L}ukaszewicz
    \thanks{E-mail : glukasz@mimuw.edu.pl, Tel.: +48 22 55 44 562}
\thanks{This research was supported by Polish Government Grant N N201 547638} }
\large\date{}}
\begin{document}
\maketitle
\begin{center}

 {\small
$^{\star\dag}$Faculty of Mathematics and Computer Science,
Institute of Computer Science,
Jagiellonian University,
ul. prof. S. {\L}ojasiewicza 6, 30-348 Krak\'ow, Poland,

$^{\ddag\S}$University of Warsaw, Mathematics Department,
     ul.Banacha 2, 02-957 Warsaw, Poland}
\end{center}
\normalsize

\begin{abstract}
\noindent We consider two-dimensional nonstationary Navier-Stokes shear flow with multivalued and nonmonotone boundary conditions on a part of the boundary of the flow domain. We prove the existence of global in time solutions of the considered problem which is governed by a partial
differential inclusion with a multivalued term in the form of Clarke subdifferential.
Then we prove the existence of a trajectory attractor and a weak global attractor for the associated multivalued semiflow.

This research is motivated by control problems for fluid flows in domains with semipermeable walls and membranes.
\end{abstract}

\vspace{0.2cm}

\noindent{\bf Keywords:} {Navier-Stokes equation, multivalued boundary condition, global solution, Clarke subdifferential, multivalued semiflow, trajectory attractor}

\vspace{0.2cm} \noindent{\it 1991 Mathematics Subject
Classification:} 76D05, 76F10, 76F20, 47J20, 49J40 \vspace{0.2cm}
 \renewcommand{\theequation}{\arabic{section}.\arabic{equation}}
 \setcounter{equation}{0}
 \section{Introduction}

In this paper we consider two-dimensional nonstationary incompressible Navier-Stokes shear flows with nonmonotone boundary conditions on a part of the boundary of the flow domain. Our aim is to prove the existence of global in time solutions of the considered problem which is governed by a partial differential inclusion, and then to prove the existence of a trajectory attractor and a weak global attractor for the associated multivalued semiflow.

This research is motivated by control problems for fluid flows in domains with semipermeable walls and membranes.

The problem we consider is as follows. The flow of an incompressible fluid in a two-dimensional domain $\Omega$
is described by the equation of motion
\begin{equation} \label{e2.1}
 u_{t} -\nu\Delta u + (u\cdot\nabla) u +\nabla p = 0 \quad {\rm for} \quad (x,t)\in\Omega\times\br^+
\end{equation}
\noindent and the incompressibility condition
\begin{equation} \label{e2.2}
{\rm div} \, u = 0 \quad {\rm for} \quad (x,t)\in\Omega\times\br^+.
\end{equation}
To define the domain $\Omega$ of the flow let us consider the channel
$$\Omega_{\infty} =\{x=(x_1,x_2): -\infty < x_1 < \infty,\,\,\,
          0 < x_2 < h(x_{1})\}, $$
where the function $h:\br\to\br$ is a positive, smooth, and L-periodic.
Then we set
  $$\Omega =   \{x=(x_1,x_2): 0 < x_1 < L, \,\,\, 0 < x_2 < h(x_{1})\} $$
and
  $\partial\Omega=\bar{\Gamma}_{0}\cup\bar{\Gamma}_{L}\cup\bar{\Gamma}_{1}$,
where $\Gamma_{0}$ and $\Gamma_{1}$ are the bottom and the top,
and $\Gamma_{L}$ is the lateral part of the boundary of $\Omega$. The domain $\Omega$
is schematically presented in Fig. \ref{fig:domain}.

\begin{figure}[!h]
\begin{center}
\includegraphics[width=6cm]{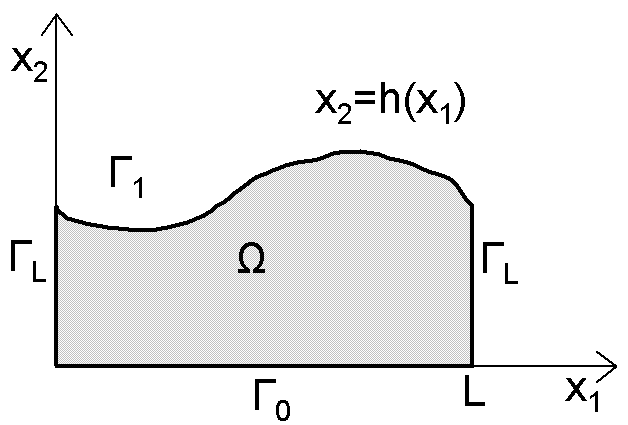}
\end{center}
\caption{Schematical view of $\Omega$.} \label{fig:domain}
\end{figure}

We are interested in solutions of (\ref{e2.1})-(\ref{e2.2})
in $\Omega\times\br^+$ which are L-periodic with respect to $x_1$.
 We assume that
\begin{equation} \label{e2.3}
 u=0 \quad  {\rm at} \quad \Gamma_{1}\times \br^+.
 \end{equation}

\noindent On the bottom $\Gamma_{0}$ we impose the following conditions.
The tangential component $u_T$ of the velocity vector on $\Gamma_{0}$ is given, namely, for some $s\in \br$,
\begin{equation} \label{e2.4}
u_T = u-u_Nn = (s,0) \quad  {\rm at} \quad \Gamma_{0}\times\br^+, \quad  {\rm where} \quad u_N=u\cdot n.
\end{equation}
Furthermore, we assume the following subdifferential boundary condition
\begin{equation} \label{e2.4a}
\tilde{p}(x,t) \in \partial j(u_N(x,t))  \quad {\rm at} \quad \Gamma_{0}\times \br^+,
\end{equation}
where $\tilde{p}=p+\frac{1}{2}|u|^2$ is the total pressure (called also the Bernoulli pressure),  $j:\br\to\br$ is a given locally Lipschitz superpotential, and  $\partial j$ is a Clarke subdifferential of $j(\cdot)$ (see for example \cite{Clarke}, \cite{DMP1} for the definition and properties of Clarke subdifferential).

Let, moreover,
\begin{eqnarray} \label{initial1}
      u(0)= u_0\quad {\rm in}\quad\Omega.
\end{eqnarray}
The considered problem is motivated by the examination of a certain two-dimensional flow in an infinite
(rectified) journal bearing $\Omega\times(-\infty,+\infty)$, where
$\Gamma_1\times(-\infty,+\infty)$ represents the outer cylinder,
and $\Gamma_0\times(-\infty,+\infty)$ represents the inner,
rotating cylinder. In the lubrication problems the gap $h$ between
cylinders is never constant. We can assume that the rectification
does not change the equations as the gap between cylinders is very
small with respect to their radii.

A physical interpretation of the boundary condition (\ref{e2.4a}) can be as follows. The superpotential $j$ in our control problem is not convex as it corresponds to the {\it nonmonotone} relation between the normal velocity $u_N$ and the total pressure $\tilde{p}$ at $\Gamma_0$. Assuming that, left uncontrolled, the total pressure at $\Gamma_0$ would increase with the increase of the normal velocity of the fluid at $\Gamma_0$, we control $\tilde{p}$ by a hydraulic device which opens wider the boundary orifices at $\Gamma_0$ when $u_N$ attains a certain value and thus $\tilde{p}$ {\it drops} at this value of $u_N$. Particular examples of such relations are provided in \cite{Migorski-Ochal2007} and \cite{Migorski-Ochal2005}.

The knowledge or the judicious choice of the boundary conditions on the fluid-solid interface is of particular interest in lubrication area which is concerned with thin film flow behaviour. The boundary conditions to be employed are determined by numerous physical parameters characterizing, for example, surface roughness and rheological properties of the fluid.

The system of equations (\ref{e2.1})-(\ref{e2.2}) with {\it non-slip} boundary conditions: (\ref{e2.3}) at $\Gamma_1$ for $h=const$ and $u=const$ on $\Gamma_{0}$ (instead of (\ref{e2.4})-(\ref{e2.4a}) on $\Gamma_0$) was intensively studied in
several contexts, some of them mentioned in the introduction of \cite{mb-gl-cambridge-2009}.
The autonomous case with $h\neq const$ and with $u=const$ on $\Gamma_{0}$ was considered in \cite{BoLu2-04, BoLu1-04} and  the nonautonomous case $h\neq const$,
$u=U(t)e_{1}$ on $\Gamma_{0}$ was considered in \cite{BoLuR-05}. Existence of exponential attractors for the Navier-Stokes and Bingham fluids with the Tresca boundary condition on $\Gamma_{0}$ was proved in \cite{gl-2012-RWA,gl-2012-DCDS}. Recently, attractors for two dimensional Navier Stokes flows with Dirichlet boundary conditions were studied in \cite{Coti}, where, in contrast to this paper, the time continuous problem has a unique solution and theory of multivalued flows is needed to study the time discretized systems.

Asymptotic behaviour of solutions for the problems governed by partial differential inclusions where the multivalued term has the form of Clarke subdifferential was studied in \cite{Kasyanov2012} and \cite{Kasyanov2013}, where the reaction-diffusion problem with multivalued semilinear term was considered, and in \cite{Kalita2010}, where the strongly damped wave equation with multivalued boundary conditions was analyzed.

For the problem considered in this paper, existence of weak solutions for the case $u_T=0$ in place of (\ref{e2.4}) was shown in \cite{Migorski-Ochal2007}.

Note that due to nonmonotone and multivalued boundary condition (\ref{e2.4a}) the formulated problem can have multiple weak solutions. The main tool used in this paper to prove existence of attractors is the theory of trajectory attractors, which, instead of the direct analysis of the multivalued semiflow (i.e. map that assigns to initial condition the set of states obtainable after some time $t$), focuses on the shift operator defined on the space of trajectories for the studied problem. This approach was introduced in papers \cite{chep-vish-1996}, \cite{malek-necas-1996} and \cite{Sell} as a method to avoid the nonuniqueness of solutions, indeed, the shift operator is \textit{uniquely} defined even if the dynamics of the problem is governed by the \textit{multivalued} semiflow. Recent results and open problems in the theory of trajectory attractors are discussed in the survey papers \cite{balibrea} and \cite{chep-vish-2011}.

The plan of the paper is as follows. In Section~\ref{secvf} we give a variational formulation of the problem. In Section~\ref{existence-sol} we prove the existence of global in time solutions, and in Section~\ref{trajectory-attractor} we prove the existence of a trajectory attractor and a weak global attractor.
\renewcommand{\theequation}{\arabic{section}.\arabic{equation}}
\setcounter{equation}{0}
\section{Weak formulation of the problem}\label{secvf}
In this section we introduce the basic notations and define a notion of a weak solution $u$ of the initial boundary value problem (\ref{e2.1})-(\ref{initial1}). For convenience, we shall work with a homogenized problem whose solution $v$ has the tangential component $v_T$ at $\Gamma_0$ equal to zero, and then $u=v+w$ for a suitable extention $w$ of the boundary data.

In order to define a weak formulation of the homogenized problem (\ref{e2.1})-(\ref{initial1}) we need to introduce some function spaces and operators.

Let
\begin{eqnarray*}
W=\{w\in C^\infty(\bar{\Omega};\br^2):\, {\rm div}\ w=0 \,\, {\rm in } \,\,\Omega, \,\,
w\,\,{\rm is \,\,L-periodic\,\,in }\,\,x_1,\,\,w&=&0\,\,{\rm at }\,\,\Gamma_1,\\ w_T&=&0\, \, {\rm on} \,\, \Gamma_0 \},
\end{eqnarray*}
and let $V$ and $H$ be the closures of $W$ in the norms of $H^1(\Omega,\br^2)$ and $L^2(\Omega,\br^2)$, respectively. In the sequel we will use notation $\|\cdot\|, \|\cdot\|_H$ to denote, respectively, norms in $V$ and $H$. We denote the trace operator $V\to L^2(\Gamma_0;\br^2)$ by $\gamma$. By the trace theorem, $\gamma$ is linear and bounded; we will denote its norm by $\|\gamma\|:=\|\gamma\|_{\C{L}(V;L^2(\Gamma_0;\br^2))}$. In the sequel we will write $u$ instead of $\gamma u$ for the sake of notation simplicity.

Let the operators $A:H^1(\Omega,\br^2)\to V^\star$ and $B[\cdot]:H^1(\Omega,\br^2)\to V^\star$ be defined by
\begin{eqnarray}\label{defA}
     \langle Au, v \rangle = \nu\int_\Omega \mathrm{rot}\ u\cdot\mathrm{rot}\ v \, dx
\end{eqnarray}
for all $u\in H^1(\Omega,\br^2)$, $v\in V$, and $B[u]=B(u,u)$, where
\begin{eqnarray} \label{defB}
     \langle B(u,w), z \rangle = \int_\Omega (\mathrm{rot}\ u\times w)\cdot z \, dx
\end{eqnarray}
for $u, w \in H^1(\Omega,\br^2)$, $z \in V$.

According to the hydrodynamical interpretation of the considered problem given in the Introduction, we can understand the $\mathrm{rot}$ operators as follows. For       $u(x_1,x_2)=(u_1(x_1,x_2),u_2(x_1,x_2))$, $\bar{x}=(x_1,x_2,x_3)$, and
    $\bar{u}(\bar{x})= (u_1(x_1,x_2),u_2(x_1,x_2),0)$, we set
    $\mathrm{rot}\,u(x_1,x_2)=\mathrm{rot}\,\bar{u}(\bar{x})$.

Let $G=\Omega\times (0,1)$ and $f(\bar{x})=f(x_1,x_2)$ be a scalar function. Then
\begin{eqnarray}\label{eqnpom}
     \int_\Omega f(x_1,x_2) dx = \int_G f(\bar{x })\,d\bar{x}.
\end{eqnarray}
In particular, for $u,v$ in $V$,
\begin{eqnarray}\label{eqn-rot-nabla}
     \langle Au, v \rangle &=& \nu\int_\Omega \mathrm{rot}\,u(x)\cdot\mathrm{rot}\,v(x) \, dx =
     \nu\int_G \mathrm{rot}\,\bar{u}(\bar{x})\cdot\mathrm{rot}\,\bar{v}(\bar{x})\, d\bar{x} \nonumber\\
     &=& \nu\int_G \nabla\bar{u}(\bar{x})\cdot\nabla\bar{v}(\bar{x})\, d\bar{x} =
     \nu\int_\Omega\nabla u(x)\cdot\nabla v(x)\,dx.
\end{eqnarray}
To work with the boundary condition (\ref{e2.4a}) we rewrite equation of motion (\ref{e2.1}) in the Lamb form,
\begin{equation} \label{e2.1L}
 u_{t} +\nu\, \mathrm{rot}\, \mathrm{rot}\, u + \mathrm{rot}\, u \times u +\nabla \tilde{p} = 0 \quad {\rm in} \quad \Omega\times\br^+.
\end{equation}
Further, to homogenize the problem, for $u\in H^1(\Omega,\br^2)$ let $w\in H^1(\Omega,\br^2)$ be such that $w_T=u_T=s$ and $w_N=0$ on $\Gamma_0$, and let $u=v+w$. Then $v\in V$ as $v_T=0$ at $\Gamma_0$. Moreover,
$v_N=u_N$ on $\Gamma_0$.

Multiplying (\ref{e2.1L}) by $z\in V$ and using the Green formula we obtain
\begin{eqnarray} \label{e33}
      \langle v'(t) + Av(t) + B[v(t)],z \rangle + \int_{\Gamma_0} \tilde{p}z_N\,d\sigma(x) = \langle F, z \rangle
      + \langle G(v), z \rangle,
\end{eqnarray}
where
\begin{eqnarray} \label{righ1}
     \langle F, z \rangle =  \nu\int_\Omega \mathrm{rot}\,w \cdot \mathrm{rot}\,z\, dx - \langle B(w,w), z \rangle
\end{eqnarray}
and
\begin{eqnarray}
     \langle G(v), z \rangle = - \langle B(v,w) + B(w,v), z \rangle
\end{eqnarray}

Above we have used the formula
\begin{eqnarray} \label{general-formula}
    \int_\Omega \mathrm{rot}\,R \cdot a\,dx = \int_\Omega R \cdot \mathrm{rot}\,a\, dx +
    \int_{\partial\Omega}(R\times a)\cdot n\, d\sigma(x)
\end{eqnarray}
with  $R=\mathrm{rot}\,v$ or $R=\mathrm{rot}\,w$. Formula (\ref{general-formula}) is easy to get using the three-dimensional vector calculus and (\ref{eqnpom}).
Observe that if $a_T=0$ on $\partial\Omega$, then we have
    $(R\times a)\cdot n = (R\times a_Nn)\cdot n =0$.

We need the following assumptions on the potential $j$:
\begin{itemize}\item[$H(j)$:]
\begin{itemize}
\item[(a)] $j:\br\to\br$ is locally Lipschitz,
\item[(b)] $\partial j$ satisfies the growth condition $|\xi|\leq c_1+c_2|u|$ for all $u\in\br$ and all $\xi\in \partial j(u)$, with $c_1>0$ and $c_2>0$,
\item[(c)] $\partial j$ satisfies the dissipativity condition $\inf_{\xi\in \partial j(u)}\xi u \geq d_1-d_2|u|^2$, for all $u\in \br$ where $d_1\in \br$ and
$d_2\in \left(0,\frac{\nu}{\|\gamma\|^2}\right)$.
\end{itemize}
\end{itemize}
Observe that assumptions $H(j)$ presented here are more general then the corresponding assumptions of Theorem 1 in \cite{Migorski-Ochal2007}, save for the fact that $j$ is assumed
there to depend on space and time variables directly.

From (\ref{e33}) we obtain the following weak formulation of the homogenized problem.

\begin{problem} \label{problem_main}
Let $v_0\in H$. Find $v\in L^2_{loc}(\br^+;V)\cap L^\infty_{loc}(\br^+;H)$
with $v'\in L^{\frac{4}{3}}_{loc}(\br^+;V^\star)$, $v(0)=v_0$,
and such that
\begin{eqnarray} \label{def-sol}
   &&  \langle v'(t) + Av(t) + B[v(t)],z \rangle + (\xi(t),z_N)_{L^2(\Gamma_0)}
     = \langle F, z \rangle +  \langle G(v(t)), z \rangle,\\
   &&  \xi(t)\in S^2_{\partial j(v_N(\cdot,t))}, \nonumber
\end{eqnarray}
for a.e. $t\in\br^+$ and for all $z\in V$.
\end{problem}
In the above definition we use the notation
$$S^2_U=\{u\in L^2(\Gamma_0): u(x)\in U(x)\ \mbox{for a.e.}\ x\in \Gamma_0\},$$
valid for a multifunction $U:\Gamma_0\to 2^{\br}$.
Note that if $v\in L^2_{loc}(\br^+;V)$ and $v'\in L^{\frac{4}{3}}_{loc}(\br^+;V^\star)$
then $v\in C(\br^+;V^\star)$, hence the initial condition make sense. Moreover, since $v\in L^\infty_{loc}(\br^+;H)$ then (see for example \cite{chep-vish-2011}, Theorem II.1.7)
$v\in C_w(\br^+;H)$, i.e. $v$ is weakly continuous as a function of time with values in $H$, and thus the initial condition make sense in the phase space $H$.

One can see (cf. \cite{Migorski-Ochal2007}) that if $v\in L^2_{loc}(\br^+;V)$ is a sufficiently smooth solution of the partial differential inclusion (\ref{def-sol}) then there exists a distribution $\tilde{p}$ such that the conditions (\ref{e2.1L}) and (\ref{e2.4a}) hold for $u=v+w$. In conclusion, the function $u=v+w$ can be regarded as a weak solution of the initial boundary value problem (\ref{e2.1})-(\ref{initial1}), provided  $v$ is a solution of Problem \ref{problem_main} with $v(0)=u_0-w$, $u_0\in H$.

The trajectory space $\mathcal{K}^+$ of Problem \ref{problem_main} is defined as the set of those of its solutions with some $v_0\in H$
that satisfy the following inequality
\begin{equation}\label{eqn:estimate_v}
\frac{1}{2}\frac{d}{dt}\|v(t)\|_H^2+C_1\|v(t)\|^2
     \leq C_2(1+\|F\|^2_{V^*}),
\end{equation}
where $C_1, C_2>0$ and inequality (\ref{eqn:estimate_v}) is understood in the sense, that for all $0\leq t_1 < t_2$ and for all $\psi\in C^\infty_0(t_1,t_2)$, $\psi\geq 0$ we have
\begin{equation}\label{eq:estimate_v_meaning}
-\frac{1}{2}\int_{t_1}^{t_2}\|v(t)\|_H^2\psi'(t)\ dt + C_1\int_{t_1}^{t_2}\|v(t)\|^2\psi(t)\ dt\leq C_2(1+\|F\|^2_{V^*})\int_{t_1}^{t_2}\psi(t)\ dt
\end{equation}
Note, that since we cannot guarantee that for every solution of Problem \ref{problem_main} we have $\langle v'(t),v(t)\rangle = \frac{1}{2}\frac{d}{dt}\|v(t)\|_H^2$, we cannot derive (\ref{eqn:estimate_v}) for every solution of Problem \ref{problem_main}.

In the next section we prove that the trajectory space is not empty.
\renewcommand{\theequation}{\arabic{section}.\arabic{equation}}
\setcounter{equation}{0}
\section{Existence of global in time solutions} \label{existence-sol}

In this section we give the proof of the existence of solutions of Problem \ref{problem_main} that satisfy inequality (\ref{eqn:estimate_v}). The proof will be based on the standard technique that uses the regularization of the multivalued term and in main points will follow  \cite{gl-2012-RWA} and \cite{Migorski-Ochal2007}.

The operators $A$ and $B$ defined in (\ref{defA}) and (\ref{defB}) and restricted to $V$ have the following properties:
\begin{itemize}
\item[(1)] $A:V\to V^*$ is a linear, continuous, symmetric operator such that
\begin{equation}\label{property:A}
\langle Av,v\rangle = \nu\|v\|^2\ \mbox{for}\ v\in V.
\end{equation}
\item[(2)] $B:V\times V\to V^*$ is a bilinear, continuous operator such that
\begin{equation}\label{property:B}
\langle B(u,v),v\rangle=0\ \mbox{for}\ v\in V.
\end{equation}
\end{itemize}

\begin{lemma} \label{prop-of-B}
Given $\lambda >0$ and $s\in\br$ there exists a smooth function $w\in H^1(\Omega;\br^2)$ such that
$\mathrm{div}\, w=0$ in $\Omega$, $w=0$ on $\Gamma_1$, $w$ is L-periodic in $x_1$, $w_T=(s,0)$, $w_N=0$ on $\Gamma_0$ and for all $v\in V$,
\begin{eqnarray} \label{estimb}
    |\langle B(v,w), v \rangle| \leq \lambda ||v||^2.
\end{eqnarray}
\end{lemma}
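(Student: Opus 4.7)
The plan is to construct $w$ as a Hopf-type extension supported in a thin strip adjacent to $\Gamma_0$, with thickness $\delta>0$ tuned to $\lambda$ at the end. Writing $h_{\min}:=\min_{x_1\in\br}h(x_1)>0$ and $h_{\max}:=\max_{x_1\in\br}h(x_1)$, fix $\chi\in C^\infty([0,\infty))$ with $\chi(0)=1$ and $\chi(y)=0$ for $y\ge 1$, and for $\delta\in(0,h_{\min})$ set $\phi_\delta(y):=y\,\chi(y/\delta)$ and
\[
w(x_1,x_2):=(s\,\phi_\delta'(x_2),\,0).
\]
Because $w$ does not depend on $x_1$, it is automatically L-periodic in $x_1$ and divergence-free. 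From $\phi_\delta'(0)=\chi(0)=1$ one has $w_T=(s,0)$ and $w_N=0$ on $\Gamma_0$; since $h(x_1)\ge h_{\min}>\delta$ and $\phi_\delta'\equiv 0$ on $[\delta,\infty)$, also $w=0$ on $\Gamma_1$. So every structural condition of the lemma is met, and $w\in C^\infty(\bar\Omega;\br^2)$.

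Next I would plug this $w$ into the defining formula (\ref{defB}). In two dimensions, with $\omega:=\partial_1 v_2-\partial_2 v_1$ and $w_2\equiv 0$, the integrand $(\mathrm{rot}\,v\times w)\cdot v$ reduces to $\omega\,w_1 v_2=s\,\omega\,\phi_\delta'(x_2)\,v_2$. The piece of $\omega$ equal to $\partial_1 v_2$ contributes $(s/2)\int\partial_1(v_2^2)\,\phi_\delta'\,dx$, which vanishes after integration in $x_1$ by L-periodicity of $v$. What remains is
\[
\langle B(v,w),v\rangle = -\,s\int_{\Omega_\delta}\phi_\delta'(x_2)\,\partial_2 v_1\,v_2\,dx,\qquad \Omega_\delta:=\{(x_1,x_2)\in\Omega:0<x_2<\delta\},
\]
and Cauchy-Schwarz yields $|\langle B(v,w),v\rangle|\le|s|\,\|\phi_\delta'\|_{L^\infty}\,\|\partial_2 v_1\|_{L^2(\Omega_\delta)}\,\|v_2\|_{L^2(\Omega_\delta)}$, with $\|\phi_\delta'\|_{L^\infty}\le 1+\|\chi'\|_{L^\infty}$ uniformly in $\delta$ and $\|\partial_2 v_1\|_{L^2(\Omega_\delta)}\le\|v\|$ trivially.

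The smallness in $\delta$ has to come from $v_2$. Since $v=0$ on $\Gamma_1$, for $(x_1,x_2)\in\Omega_\delta$ we have $v_2(x_1,x_2)=-\int_{x_2}^{h(x_1)}\partial_2 v_2(x_1,t)\,dt$, so Cauchy-Schwarz in $t$ followed by Fubini gives
\[
\|v_2\|_{L^2(\Omega_\delta)}\le\sqrt{\delta\,h_{\max}}\,\|\partial_2 v_2\|_{L^2(\Omega)}\le\sqrt{\delta\,h_{\max}}\,\|v\|.
\]
Combining everything, $|\langle B(v,w),v\rangle|\le C|s|\sqrt{\delta}\,\|v\|^2$ for a constant $C$ depending only on $\chi$ and $h$, and it suffices to choose $\delta\in(0,h_{\min})$ with $C|s|\sqrt{\delta}\le\lambda$.

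The main subtlety relative to the familiar Hopf-Ladyzhenskaya argument in the all-Dirichlet setting is that $v\in V$ need not vanish on $\Gamma_0$; only the tangential component $v_1$ does. Hence smallness near $\Gamma_0$ cannot be extracted from a Hardy-type bound anchored at $\Gamma_0$. The saving feature is the algebraic structure of the Lamb-form bilinear: because the normal boundary datum $w_N$ is zero, one can take $w_2\equiv 0$, after which the only surviving contribution pairs $w_1$ with $v_2$, and $v_2$ vanishes on the opposite boundary $\Gamma_1$. Integrating from $\Gamma_1$ across the strip of width $\delta$ supplies the needed factor of $\sqrt{\delta}$, which is exactly what the cutoff construction does not on its own provide.
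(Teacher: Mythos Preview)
Your argument is correct, and the extension $w$ you build is essentially the same as the paper's: a horizontal shear profile $(w_1(x_2),0)$ supported in a thin strip above $\Gamma_0$. The difference lies in how the smallness of $|\langle B(v,w),v\rangle|$ is extracted.

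After the same reduction to $\int_\Omega \omega\,w_1 v_2\,dx$, the paper factors the integrand as $|\nabla v|\cdot|x_2 w_1|\cdot|v_2/x_2|$ and then invokes the Hardy inequality $\|v/x_2\|_{L^2(\Omega)}\le 2\|v\|$. That Hardy step, however, requires the relevant component to vanish on $\Gamma_0=\{x_2=0\}$; for $v\in V$ only the tangential component $v_1$ does, whereas the factor that actually survives is $v_2$, and $\|v_2/x_2\|_{L^2}$ is in general infinite for $v\in V$. Your final paragraph pinpoints exactly this obstruction. You bypass it by using the Dirichlet condition on the opposite wall $\Gamma_1$ to obtain the Poincar\'e-type bound $\|v_2\|_{L^2(\Omega_\delta)}\le(\delta\,h_{\max})^{1/2}\|v\|$, so that smallness comes from the strip width $\delta$ rather than from a Hardy weight at $\Gamma_0$. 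Thus your route is not merely different from the paper's---it supplies a justification that the paper's Hardy argument, as written, does not provide under the boundary conditions defining $V$.
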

\begin{proof} Let $w$ be of the form $w(x_2) = (s\rho(x_2/h_0),0)$, where $\rho:[0,\infty)\to [0,1]$ is a smooth function such that $\rho(0)=1$, $\rho'(0)=0$,
$\mathrm{supp} \rho \subset [0,\min\{\frac{\lambda}{2|s|},h_0, 1)\}]$, and
$h_0$ is the minimum value of $h(x_1)$ on $[0,L]$. It is clear that all the stated properties of $w$ other then (\ref{estimb}) hold. To prove (\ref{estimb}) observe that under our assumptions
\begin{eqnarray} \label{ident1}
\int_\Omega|\mathrm{rot} w|^2dx=\int_\Omega|\nabla w|^2dx
\end{eqnarray}
 and then
\begin{eqnarray}
    |\langle B(v,w), v \rangle| &\leq& \|\nabla v\|_{L^2(\Omega)}
    \|x_2w(x_2)\|_{L^\infty([0,h_0])}\left\|\frac{v}{x_2}\right\|_{L^2(\Omega)}
                              \\            &\leq& ||v||\frac{\lambda}{2}2||v|| =
                              \lambda||v||^2\nonumber
\end{eqnarray}
in view of the Hardy inequality.
\end{proof}

\begin{lemma}\label{thm:a-priori}
Let $j:\br\to\br$ satisfy $H(j)$. For any solution $v$ of Problem \ref{problem_main} and for
a.e.~$t\in \br^+$,
\begin{eqnarray} \label{eqn:estimate_vprime}
    \|v'(t)\|_{V^\star} \leq C_3(1 + \|v(t)\| + \|v(t)\|_H^{1/2}\|v(t)\|^{\frac{3}{2}}),
\end{eqnarray}
with a constant $C_3> 0$ independent of $v$.
\end{lemma}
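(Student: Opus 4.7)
The plan is to read off $v'(t) = -Av(t) - B[v(t)] - \Xi(t) + F + G(v(t))$ from (\ref{def-sol}), where $\Xi(t) \in V^\star$ is defined by $\langle \Xi(t), z\rangle = (\xi(t), z_N)_{L^2(\Gamma_0)}$, and then bound each of the five summands in the $V^\star$ norm separately. Four of them will be linear in $\|v(t)\|$ or constant; only the convective term $B[v]$ will produce the nonlinear factor $\|v\|_H^{1/2}\|v\|^{3/2}$ appearing on the right-hand side of (\ref{eqn:estimate_vprime}).

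First, by (\ref{property:A}) and symmetry, $A$ is a bounded linear operator $V \to V^\star$, so $\|Av(t)\|_{V^\star} \leq \nu\|v(t)\|$. The lower-order term is $\|G(v(t))\|_{V^\star} \leq \|B(v(t),w)\|_{V^\star} + \|B(w,v(t))\|_{V^\star} \leq C(w)\|v(t)\|$, coming from the continuity of $B$ on $V\times V$, and $\|F\|_{V^\star}$ is a fixed constant depending only on $w$ and $\nu$. For the boundary term, hypothesis $H(j)(b)$ gives the pointwise bound $|\xi(t,x)| \leq c_1 + c_2|v_N(t,x)|$ on $\Gamma_0$, hence $\|\xi(t)\|_{L^2(\Gamma_0)} \leq c_1|\Gamma_0|^{1/2} + c_2\|\gamma v(t)\|_{L^2(\Gamma_0;\br^2)}$; combining with the trace bound $\|\gamma z\|_{L^2(\Gamma_0;\br^2)} \leq \|\gamma\|\cdot\|z\|$ yields $\|\Xi(t)\|_{V^\star} \leq C(1 + \|v(t)\|)$.

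The key step is the estimate on $B[v(t)]$. Here I would use the standard two-dimensional Ladyzhenskaya inequality $\|u\|_{L^4(\Omega;\br^2)}^2 \leq C\|u\|_H \|u\|$ valid for $u \in V$, together with Hölder's inequality applied to the trilinear form:
\begin{equation*}
|\langle B(v,v), z\rangle| \leq C\|v\|_{L^4(\Omega;\br^2)}\|v\|\,\|z\|_{L^4(\Omega;\br^2)} \leq C\|v\|_H^{1/2}\|v\|^{3/2}\|z\|_H^{1/2}\|z\|^{1/2}.
\end{equation*}
Taking the supremum over $z \in V$ with $\|z\| \leq 1$ and using $\|z\|_H \leq C\|z\|$ gives $\|B[v(t)]\|_{V^\star} \leq C\|v(t)\|_H^{1/2}\|v(t)\|^{3/2}$. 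One must verify that the trilinear estimate is independent of the specific representation $\mathrm{rot}\,v \times v$ used in (\ref{defB}); this follows from the identity (\ref{eqn-rot-nabla}) and the fact that $(\mathrm{rot}\,u\times w)\cdot z$ can, after integrating over $\Omega\times(0,1)$, be rewritten in the standard convective form $\sum_{i,j} w_i \partial_i u_j z_j$ up to gradient terms that vanish on $V$.

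Summing the five bounds produces a constant $C_3$ (depending on $\nu$, $w$, $c_1$, $c_2$, $\|\gamma\|$, and the Ladyzhenskaya constant) such that (\ref{eqn:estimate_vprime}) holds for a.e.\ $t\in\br^+$. The main obstacle I anticipate is purely bookkeeping, namely checking that the Lamb-form trilinear term really satisfies the same $\|v\|_H^{1/2}\|v\|^{3/2}$ estimate as the classical convective term, since the paper has not stated this explicitly; everything else is standard.
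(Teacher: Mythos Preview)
Your proposal is correct and follows essentially the same decomposition and estimates as the paper's proof. The only difference is cosmetic: for the convective term the paper does not convert to the standard $(v\cdot\nabla)v$ form but estimates the Lamb representation directly, using the pointwise bound $|(\mathrm{rot}\,v\times v)\cdot z|\leq |\mathrm{rot}\,v|\,|v|\,|z|$, then H\"older with exponents $(2,4,4)$ together with the identity $\|\mathrm{rot}\,v\|_{L^2(\Omega)}=\|\nabla v\|_{L^2(\Omega)}$ from (\ref{ident1}) and Ladyzhenskaya's inequality; so your anticipated ``bookkeeping obstacle'' does not arise.
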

\begin{proof}
Let $v$ be a solution of Problem \ref{problem_main}. For any test function $z \in V$ we have, for a.e. $t\in \br^+$,
$$
|\langle v'(t),z \rangle | \leq \|F\|_{V^*}\|z\| + \|\xi(t)\|_{L^2(\Gamma_0)}\|\gamma\|\|z\| + |\langle G(v(t)) - Av(t) - B[v(t)], z\rangle |.
$$
From the growth condition $H(j)(b)$ it follows that $\|\xi(t)\|_{L^2(\Gamma_0)}\leq C(1+\|v(t)\|)$ with a constant $C>0$. Moreover $|\langle Av(t),z\rangle|+|\langle G(v(t)),z\rangle|\leq C\|v(t)\|\|z\|$. It remains to estimate the nonlinear term. For all $w\in V$ we have (\ref{ident1}).
Now, from H\"{o}lder's and Ladyzhenskaya's inequalities we obtain
\begin{eqnarray*}
\arrowvert\langle B[v(t)],z\rangle\arrowvert &\leq&
\int_\Omega|\mathrm{rot} v(t)||v(t)|| z|\, dx \leq
||v(t)||||v(t)||_{L^4(\Omega;\br^2)}||z||_{L^4(\Omega;\br^2)} \\ &\leq&
C||v(t)||_H^{1/2}||v(t)||^{3/2}||z||.
\end{eqnarray*}
In this way we obtain (\ref{eqn:estimate_vprime}).
\end{proof}

\begin{theorem} \label{thm-existence-sol}
Let the potential $j$ satisfy $H(j)$, $F\in V^\star$, and
$u_0\in H$. Then for every $v_0\in H$ there exists $v\in \mathcal{K}^+$ such that $v(0)=v_0$.
\end{theorem}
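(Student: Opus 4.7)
The plan is to combine a smoothing regularization of the Clarke subdifferential with a Galerkin approximation in $V$, derive uniform a priori estimates in the natural energy spaces, and pass to the double limit while identifying the boundary integral as a selection of $\partial j(v_N)$. First I would apply Lemma \ref{prop-of-B} to fix an extension $w$ with $|\langle B(v,w),v\rangle|\leq \lambda\|v\|^2$ for $\lambda$ small enough that $C_1:=\nu-\lambda-d_2\|\gamma\|^2>0$; this is possible precisely because $H(j)(c)$ forces $d_2<\nu/\|\gamma\|^2$. Note also that $\langle B(w,v),v\rangle=0$ by (\ref{property:B}), so only the $B(\cdot,w)$ part of $G$ contributes to the energy balance.

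Next, regularize the potential by mollification, $j_\e=j*\rho_\e$, obtaining $C^1$ approximations whose derivatives $j_\e'$ inherit the growth and dissipativity bounds of $H(j)$ up to $\e$-independent constants. For each $\e$, choose a basis $\{e_k\}$ of $V$ (e.g.\ eigenfunctions of $A$) and seek Galerkin approximations $v_\e^n(t)=\sum_{k=1}^n c_k^n(t)e_k$ solving the finite-dimensional Cauchy problem obtained from (\ref{def-sol}) by projection onto $\mathrm{span}\{e_1,\dots,e_n\}$ and by replacing $\xi$ with $j_\e'(v_{\e,N}^n)$. Carath\'eodory theory gives local solutions. Testing with $v_\e^n(t)$ itself, using (\ref{property:B}), Lemma \ref{prop-of-B}, and $H(j)(c)$, followed by Young's inequality, yields
\begin{equation*}
\tfrac{1}{2}\tfrac{d}{dt}\|v_\e^n\|_H^2 + \tilde C_1\|v_\e^n\|^2 \leq C_2(1+\|F\|_{V^\star}^2),
\end{equation*}
with $\tilde C_1>0$ and constants independent of $\e,n$. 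This globalizes the Galerkin solutions and delivers (\ref{eqn:estimate_v}). Combined with Lemma \ref{thm:a-priori} applied to the regularized equation, one gets uniform bounds for $v_\e^n$ in $L^\infty_{loc}(\br^+;H)\cap L^2_{loc}(\br^+;V)$ and for $(v_\e^n)'$ in $L^{4/3}_{loc}(\br^+;V^\star)$.

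By Banach-Alaoglu and the Aubin-Lions compactness lemma, along a diagonal subsequence (first $n\to\infty$, then $\e\to 0$) one obtains weak/weak-$\star$ limits in the above spaces and strong convergence in $L^2_{loc}(\br^+;H)$. Continuity and compactness of the trace operator $\gamma$ then yield strong convergence of the normal traces $v_{\e,N}^n\to v_N$ in $L^2_{loc}(\br^+;L^2(\Gamma_0))$. The convective term $B[v_\e^n]$ and the $G$ term pass to the limit by standard Navier-Stokes arguments based on strong $L^2$ convergence. The distributional energy inequality (\ref{eq:estimate_v_meaning}) is recovered by multiplying the Galerkin energy identity by any nonnegative $\psi\in C_0^\infty(t_1,t_2)$ and passing to the limit using weak lower semicontinuity, which shows $v\in \mathcal{K}^+$.

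The main obstacle is the identification step at the boundary. The uniform $L^2(\Gamma_0)$ bound on $j_\e'(v_{\e,N}^n)$ produces a weak limit $\xi\in L^2_{loc}(\br^+;L^2(\Gamma_0))$, and one must show $\xi(t,x)\in \partial j(v_N(t,x))$ for a.e.\ $(t,x)$. The subtlety is that $\partial j$ is only upper semicontinuous with closed convex values, not single-valued. The tool I would invoke is the standard convergence-of-subgradients lemma for the Clarke subdifferential: combining strong $L^2$ convergence of $v_{\e,N}^n\to v_N$, the pointwise upper-semicontinuity estimate $\limsup_{\e\to 0} j_\e'(u_\e)\varphi \leq j^0(u;\varphi)$ whenever $u_\e\to u$, and a Mazur/Aubin-Cellina convexification argument, one concludes the inclusion. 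Once $\xi$ is identified, the limit $v$ satisfies (\ref{def-sol}), and since $v(0)=v_0$ by the weak-$H$ continuity stated after Problem \ref{problem_main}, we have $v\in\mathcal{K}^+$ with the prescribed initial datum.
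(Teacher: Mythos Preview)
Your outline matches the paper's proof almost exactly: mollification of $j$, Galerkin in eigenfunctions, the energy estimate via Lemma~\ref{prop-of-B} and $H(j)(c)$, Aubin--Lions compactness, strong convergence of normal traces, and weak lower semicontinuity for (\ref{eq:estimate_v_meaning}). The paper collapses your double limit $(\e,n)$ into a single diagonal sequence $j_n$ on $V_n$, but this is cosmetic.

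Two places deserve more care than you give them. First, the convective term is \emph{not} handled by ``standard Navier--Stokes arguments'': because $v_N$ need not vanish on $\Gamma_0$, integrating $\langle B[v_n],z\rangle$ by parts leaves a boundary term $\tfrac{1}{2}\int_{\Gamma_0}(v_n)_N^2 z_N\,d\sigma$ in addition to the usual interior trilinear form. The paper derives this identity explicitly (their formula (\ref{newformula1})) and then uses the strong trace convergence together with the boundedness of $v_n$ in $L^2(0,T;L^4(\Gamma_0;\br^2))$ to pass to the limit in that boundary piece. You have the strong trace convergence available, but you should be aware that you need it here and not only for the subdifferential term. Second, ``$v(0)=v_0$ by the weak-$H$ continuity'' is not an argument: weak continuity only tells you $v(0)$ is defined. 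The paper checks $v(0)=v_0$ by testing the identity $v_n(t)-v(t)=(v_{0n}-v(0))+\int_0^t(v_n'-v')$ against $\phi\in C([0,T];V)$ and using dominated convergence; you should include a step like this.
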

\begin{proof}
Let $\varrho\in C^\infty_0(-1,1)$ be a mollifier such that $\int_{-1}^1\varrho(s)\ ds=1$ and $\varrho(s)\geq 0$. We define
$\varrho_n:\br\to \br$ by $\varrho_n(s)=n\varrho(ns)$ for $n\in\mathbb{N}$ and $s\in \br$. Then $\mbox{supp}\ \varrho_n\subset \left(-\frac{1}{n},\frac{1}{n}\right)$.
We consider $j_n:\br\to \br$ defined by the convolution
$$
j_n(r)=\int_\br\varrho_n(s) j(r-s)\ ds\ \ \mbox{for}\ r\in\br.
$$
Note that $j_n\in C^\infty(\br)$. Moreover, by the computation analogous to proofs of Lemmas 5 and 9 in \cite{Kalita2010} it follows that for all $n\geq N_0$, where $N_0\in\mathbb{N}$ is given, regularized functions $j_n$ still satisfy $H(j)$, where the constants $c_1,c_2,d_1,d_2$ are different then the ones for $j$, but independent on $n$, and still we have $d_2\in \left(0,\frac{\nu}{\|\gamma\|^2}\right)$.

Let us furthermore take the sequence $V_n$ of finite dimensional spaces such that $V_n$ is spanned by the first $n$ eigenfuctions of the Stokes operator with the Dirichlet and periodic boundary conditions given in the definition of the space $V$. Then $\{V_n\}_{n=1}^\infty$ approximate $V$ from inside, i.e. $\overline{\bigcup_{n=1}^\infty V_n} = V$. Moreover we take the sequence $v_{0n}\to v_0$ strongly in $H$ such that $v_{0n}\in V_n$. We formulate
the regularized Galerkin problems for $n\in\mathbb{N}$:\\
Find a continuous function $v_n:\br^+\to V_n$ such that for a.e. $t\in \br^+$ $v_n$ is differentiable and
\begin{eqnarray} \label{def-sol-3}
 &&\langle v_n'(t) + Av_n(t) + B[v_n(t)],z \rangle + (j_n'((v_n)_N(t)(\cdot)),z_N)_{L^2(\Gamma_0)} =\\
     &&= \langle F + G(v_n(t)), z \rangle,\nonumber\\
     && v_n(0)=v_{0n} \label{ini-3}
\end{eqnarray}
for a.e. $t\in \br^+$ and for all $z\in V_n$.

We first show that if $v_n$ solves (\ref{def-sol-3}) then an estimate analogous to (\ref{eqn:estimate_v}) holds.

We take $z=v_n(t)$ in (\ref{def-sol-3}) and, using (\ref{property:A}) and (\ref{property:B}), as well as the fact that $\langle B(v,w),v\rangle \leq \lambda \|v\|^2$ for all $v\in V$ where $\lambda$ can be made arbitrarily small (Lemma \ref{prop-of-B}), we obtain
$$
\frac{1}{2}\frac{d}{dt}\|v_n(t)\|_H^2+\nu\|v_n(t)\|^2+ (j_n'((v_n)_N(t)(\cdot)),(v_n)_N(t))_{L^2(\Gamma_0)}
     \leq \|F\|_{V^*}\|v_n(t)\| +  \lambda\|v_n(t)\|^2,
$$
for a.e. $t\in \br^+$. Using $H(j)(c)$ and the Cauchy inequality with some $\varepsilon>0$ we obtain
$$
\frac{1}{2}\frac{d}{dt}\|v_n(t)\|_H^2+\nu\|v_n(t)\|^2+ d_1m(\Gamma_0) - d_2\|(v_n)_N(t)\|^2_{L^2(\Gamma_0)}
     \leq C(\varepsilon)\|F\|^2_{V^*} +  (\lambda+\varepsilon)\|v_n(t)\|^2,
$$
for a.e. $t\in \br^+$, where $\varepsilon>0$ is arbitrary and the constant $C(\varepsilon)>0$. Note that by the trace theorem
$\|(v_n)_N(t)\|^2_{L^2(\Gamma_0)}\leq \|v_n(t)\|^2_{L^2(\Gamma_0;\br^2)}\leq \|\gamma\|^2\|v_n(t)\|^2$. It follows that
$$
\frac{1}{2}\frac{d}{dt}\|v_n(t)\|_H^2+(\nu-d_2\|\gamma\|^2)\|v_n(t)\|^2
     \leq C(\varepsilon)\|F\|^2_{V^*} +  (\lambda+\varepsilon)\|v_n(t)\|^2+ |d_1|m(\Gamma_0),
$$
for a.e. $t\in \br^+$.
It is enough to take $\lambda=\varepsilon=\frac{\nu-d_2\|\gamma\|^2}{4}$. We get, with $C_1,C_2>0$ independent of $t$,
\begin{equation}\label{eqn:galerkin_estimate_v}
\frac{1}{2}\frac{d}{dt}\|v_n(t)\|_H^2+C_1\|v_n(t)\|^2
     \leq C_2(1+\|F\|^2_{V^*}).
\end{equation}
Note that, after integration, we have for all $t\geq 0$
\begin{equation}\label{eqn:integral}
\frac{1}{2}\|v_n(t)\|_H^2+C_1\int_{0}^t\|v_n(s)\|^2\ ds\leq \frac{1}{2}\|v_{0n}\|_H^2 + C_2(1+\|F\|^2_{V^*})t
\end{equation}
Existence of the solution to the Galerkin problem (\ref{def-sol-3}) is standard and follows by the Caratheodory theorem and estimate (\ref{eqn:integral}). Note that for all $n\in N$ solutions of (\ref{def-sol-3}) satisfy the estimate
of Lemma \ref{thm:a-priori} where the constants do not depend on initial conditions and the dimension $n$.
We deduce from (\ref{eqn:galerkin_estimate_v})  that for all $T>0$
\begin{equation}\label{eqn:v_n-bounded}
v_n\ \ \mbox{is bounded}\ \ \mbox{in}\ \ L^2(0,T;V)\cap L^\infty(0,T;H).
\end{equation}
From Lemma \ref{thm:a-priori} it follows that for all $T>0$
\begin{equation}\label{eqn:v_n-prime-bounded}
v_n'\ \ \mbox{is bounded in}\ \ L^\frac{4}{3}(0,T;V^*).
\end{equation}
In view of (\ref{eqn:v_n-bounded}) and (\ref{eqn:v_n-prime-bounded}), by diagonalization, we can construct a subsequence such that
\begin{eqnarray}\label{eqn:limit_vn}
v_n \to v\ \ \mbox{weakly in}\ \ L^2_{loc}(\br^+;V),
\end{eqnarray}
and
\begin{eqnarray}\label{eqn:limit_vnprime}
v_n' \to v'\ \ \mbox{weakly in}\ \ L^\frac{4}{3}_{loc}(\br^+;V^*).
\end{eqnarray}

First we show that $v(0)=v_0$ in $H$. To this end choose $T>0$. We have for $t\in (0,T)$
$$
v_n(t)=v_{n0} + \int_0^t v_n'(s)\, ds, \quad v(t)=v(0) + \int_0^tv'(s)\,ds,
$$
where the equalities hold in $V^*$, and take $\phi\in C([0,T];V)$ with $\int_0^T\phi(t)dt\not=0$. Then
$$
\int_0^T\langle v_n(t)-v(t),\phi(t)\rangle\, dt = \int_0^T\langle v_{n0}-v(0),\phi(t)\rangle \,dt +
\int_0^T\left\langle\int_0^t(v_n'(s)-v'(s))\, ds, \phi(t)\right \rangle\, dt.
$$
The left-hand side of this equality goes to zero with $n\to\infty$ as $v_n\to v$ weakly in $L^2(0,T;V)$.
We shall prove that the last integral on the right-hand side also goes to zero with
$n\to\infty$ so that $v(0)=v_0$ in $H$. We have,
$$
f_n(t)= \left\langle\int_0^t(v_n'(s)-v'(s))ds, \phi(t)\right\rangle
=  \int_0^t\langle v_n'(s)-v'(s), \phi(t)\rangle ds,
$$
where $f_n(t)\to 0$ as $n\to\infty$ as $v_n'\to v'$ weakly in $L^\frac{4}{3}(0,T;V^*)$, and
$$
|f_n(t)| \leq ||v_n'-v'||_{L^\frac{4}{3}(0,T;V^*)}T^\frac{1}{4}\max_{[0,T]}\|\phi(t)\|
\leq C T^\frac{1}{4}\max_{[0,T]}\|\phi(t)\|.
$$
By the Lebesque dominated convergence lemma the result follows.

Now we pass with $n\to \infty$ in the equation (\ref{def-sol-3}). To this end let us choose $T>0$. We multiply (\ref{def-sol-3}) by $\phi\in C([0,T])$
and integrate with respect to t in $[0,T]$. Passing to the limit in linear terms $A$ and $G$ is standard. We focus on the multivalued term and the convective term.

Let $Z=H^{1-\delta}(\Omega;\mathbb{R}^2)$ for $\delta\in \left(0,\frac{1}{2}\right)$.
Since $V\subset Z$ compactly, from the Lions-Aubin lemma it follows that, for a subsequence, $v_n\to v$ strongly in $L^2(0,T;Z)$, and, by continuity of the trace operator,
\begin{equation}\label{eqn:normal-trace-strong}
(v_n)_N\to v_N\ \ \mbox{strongly in}\ \ L^2(0,T;L^2(\Gamma_0)).
\end{equation}

From the growth condition $H(j)(b)$ valid for all $j_n'$ with the constants independent of $n\in\mathbb{N}$ and from (\ref{eqn:normal-trace-strong}) it follows that
the sequence $j_n'((v_n)_N(\cdot,\cdot))$ is bounded in $L^2(0,T;L^2(\Gamma_0))$, and we can
extract a subsequence (renamed  $j_n'((v_n)_N(\cdot,\cdot))$) such that
\begin{equation}\label{eqn:derivative-weak}
j_n'((v_n)_N(\cdot,\cdot)) \to \xi\ \ \mbox{weakly in}\ \ L^2(0,T;L^2(\Gamma_0)).
\end{equation}

We need to show that $\xi\in S^2_{\partial j(v_N(\cdot,t))}$. The proof of this fact follows the lines of the proof of Step III of Theorem 1 in \cite{Miettinen1999}. Let us denote $\Gamma_{0T}=\Gamma_0\times (0,T)$. From (\ref{eqn:normal-trace-strong}) it follows that, for a subsequence, $(v_n)_N(x,t)\to v_N(x,t)$ for a.e. $(x,t)\in \Gamma_{0T}$ and $|(v_n)_N(x,t)|\leq h(x,t)$ for some $h\in L^2(\Gamma_{0T})$. Moreover from (\ref{eqn:derivative-weak}) it follows that $j_n'((v_n)_N(\cdot,\cdot))\to \xi(\cdot,\cdot)$ weakly in $L^1(\Gamma_{0T})$.
Now take $w\in L^\infty(\Gamma_{0T})$. We have
 $$
 \int_{\Gamma_{0T}}\xi(x,t)w(x,t)\ d\sigma(x)\,dt=\lim_{n\to\infty}\int_{\Gamma_{0T}}j_n'((v_n)_N(x,t))w(x,t)\ d\sigma(x)\,dt.
 $$
From $H(j)(b)$ we can invoke the Fatou lemma and obtain
 \begin{eqnarray}
 &&\int\limits_{\Gamma_{0T}}\xi(x,t)w(x,t)\ d\sigma(x)\,dt\leq \int_{\Gamma_{0T}}\limsup_{\substack{n\to\infty,\\ \lambda\to 0^+}}\frac{j_n((v_n)_N(x,t)+\lambda w(x,t))-j_n((v_n)_N(x,t))}{\lambda}\ d\sigma(x)\,dt=\nonumber\\
 && = \int\limits_{\Gamma_{0T}}\limsup_{\substack{n\to\infty,\\ \lambda\to 0^+}}\int\limits_{\br}\varrho_n(\tau)\frac{j((v_n)_N(x,t)-\tau+\lambda w(x,t))-j((v_n)_N(x,t)-\tau)}{\lambda}\ d\tau\,d\sigma(x)\,dt\leq\nonumber \\
 &&\leq \int\limits_{\Gamma_{0T}}\limsup_{\substack{n\to\infty,\\ \lambda\to 0^+,\\z(x,t)\to 0}}\int\limits_{\br}\varrho_n(\tau)\frac{j(v_N(x,t)+z(x,t)+\lambda w(x,t))-j(v_N(x,t)+z(x,t))}{\lambda}\ d\tau\,d\sigma(x)\,dt=\nonumber\\
 &&=\int\limits_{\Gamma_{0T}}j^0(v_N(x,t);w(x,t))\ \,d\sigma(x)\,dt
 \end{eqnarray}
 Since the choice of $w$ is arbitrary, from the definition of the generalized gradient we get
 $$
 \xi(x,t)\in \partial j(v_N(x,t))\ \ \mbox{a.e.}\ \ (x,t)\in\Gamma_{0T},
 $$
 and the desired result follows.

Now we show the convergence in the nonlinear term, namely, that (for a subsequence)
\begin{equation} \label{nonlinear-new2}
\int_0^T\int_\Omega(\mathrm{rot}\,v_n(t)\times v_n(t))\cdot z(x)\phi(t)dxdt \to
\int_0^T\int_\Omega(\mathrm{rot}\,v(t)\times v(t))\cdot z(x)\phi(t)dxdt.
\end{equation}
First we prove that
\begin{eqnarray}\label{newformula1}
&&\int_0^T\int_\Omega(\mathrm{rot}\,v_n(t)\times v_n(t))\cdot z(x)\phi(t)dxdt
= \frac{1}{2}\int_0^T\int_{\Gamma_0}(v_{n})_N^2(x,t)z_N(x)\phi(t)d\sigma(x)dt \nonumber \\
&-&\int_0^T\int_\Omega(v_n(x,t)\cdot\nabla)z(x)v_n(x,t)\phi(t)dxdt.
\end{eqnarray}
From (\ref{general-formula}),  as well as the formulas
$(a\times b)\cdot c=(b\times c)\cdot a$, and
\begin{eqnarray} \label{form2}
\nabla\cdot F=0, \nabla\cdot G=0 \Longrightarrow \mathrm{rot}\,(F\times G)= (G\cdot\nabla)F - (F\cdot\nabla)G,
\end{eqnarray}
we have
\begin{eqnarray}
\langle B[v_n(t)],z\rangle = \int_\Omega(v_n(t)\times z)\cdot\mathrm{rot}\,v_n(t)\,dx&=&\\
\int_\Omega\mathrm{rot}(v_n(t)\times z)\cdot v_n(t)\,dx &+&
\int_{\partial\Omega}(v_n(t)\times(v_n(t)\times z))\cdot n\,d\sigma(x).\nonumber
\end{eqnarray}
The surface integral is equal to zero and hence, using (\ref{form2}) in the right hand side, we obtain
\begin{equation} \label{our-formula}
\langle B[v_n(t)],z\rangle = \int_\Omega(z\cdot\nabla)v_n(t)\cdot v_n(t)dx
 -
\int_\Omega(v_n(t)\cdot\nabla)z\cdot v_n(t) dx.
\end{equation}
Intergation by parts in the first integral on the right hand side and then integration in the time variable give the result.

Since we can deal with the second term on the right-hand side of (\ref{newformula1}) as in the usual Navier-Stokes theory
\cite{temam-navier}, we consider only the surface integral. Taking the difference of the corresponding terms and setting $z_N(x)\phi(t) = \psi(x,t)$ we obtain
\begin{eqnarray*}
&&|\int_0^T\int_{\Gamma_0}((v_{n})_N(x,t)-v_N(x,t))((v_{n})_N(x,t)+v_N(x,t))\psi(x,t)\, d\sigma(x)dt| \\
&\leq&
||(v_n)_N-v_N||_{L^2(0,T;L^2(\Gamma_0))}
||v_n+v||_{L^2(0,T;L^4(\Gamma_0;\br^2))}\max_{t\in[0,T]}||\psi(t)||_{L^4(\Gamma_0)}.
\end{eqnarray*}
This proves (\ref{nonlinear-new2}) as $v_n$ is bounded in
$L^2(0,T;L^4(\Gamma_0;\br^2))$ ($H^{1/2}(\Gamma_0;\br^2)\subset L^r(\Gamma_0;\br^2)$ continuously for
every $r\geq 1$) and $\|(v_n)_N-v_N\|_{L^2(0,T;L^2(\Gamma_0))}\to 0$ by (\ref{eqn:normal-trace-strong}).

Hence, the limit $v$ solves Problem \ref{problem_main}. It remains to show (\ref{eq:estimate_v_meaning}). The proof follows the lines of that of Theorem 8.1 in \cite{chep-vish-1997}. Let us fix $0\leq  t_1 < t_2$ and choose $\psi\in C^\infty_0(t_1,t_2)$ with $\psi\geq 0$. We multiply (\ref{eqn:galerkin_estimate_v}) by $\psi$ and integrate by parts. We have
\begin{equation}\label{eqn:aprioriintegral}
-\frac{1}{2}\int_{t_1}^{t_2}\|v_n(t)\|_H^2\psi'(t)\ dt+C_1\int_{t_1}^{t_2}\|v_n(t)\|^2\psi(t)\ dt
     \leq C_2(1+\|F\|^2_{V^*})\int_{t_1}^{t_2}\psi(t)\ dt.
\end{equation}
We need to pass to the limit ($n\to \infty$) in the last inequality. From (\ref{eqn:limit_vn}) and (\ref{eqn:limit_vnprime}) by the Lions-Aubin lemma we conclude that $v_n\to v$ strongly in $L^2(t_1,t_2;H)$. From inequality
$$
\int_{t_1}^{t_2}(\|v_n(t)\|_H-\|v(t)\|_H)^2\ dt\leq \int_{t_1}^{t_2}\|v_n(t)-v(t)\|_H^2\ ds
$$
it follows that $\|v_n(t)\|_H\to \|v(t)\|_H$ strongly in $L^2(t_1,t_2)$, and hence, for a subsequence, $\|v_n(t)\|_H^2\to \|v(t)\|_H^2$ for almost every $t\in (t_1,t_2)$. Since from (\ref{eqn:integral}) it follows that functions $\|v_n(t)\|_H^2\psi'(t)$ have an integrable majorant on $(t_1,t_2)$, we have, by Lebesgue dominated convergence theorem that
\begin{equation}
\int_{t_1}^{t_2}\|v_n(t)\|_H^2\psi'(t)\ dt\to \int_{t_1}^{t_2}\|v(t)\|_H^2\psi'(t)\ dt.
\end{equation}
Now from (\ref{eqn:limit_vn}) it follows that $v_n(t)(\psi(t))^{\frac{1}{2}}\to v(t)(\psi(t))^{\frac{1}{2}}$ weakly in $L^2(t_1,t_2;V)$ and hence by weak lower semicontinuity of the norm we obtain
\begin{equation}
\int_{t_1}^{t_2}\|v(t)\|^2\psi(t)\ dt\leq \liminf_{n\to\infty} \int_{t_1}^{t_2}\|v_n(t)\|_H^2\psi(t)\ dt.
\end{equation}
Thereby, we can pass to the limit in (\ref{eqn:aprioriintegral}) which gives us (\ref{eq:estimate_v_meaning}) and the proof is complete.
\end{proof}

\renewcommand{\theequation}{\arabic{section}.\arabic{equation}}
\setcounter{equation}{0}
\section{Existence of attractors} \label{trajectory-attractor}
In this section we prove the existence of a trajectory attractor and a weak global attractor for the considered problem.

We have shown that for any initial condition $v_0\in H$ Problem \ref{problem_main} has at least one solution
$v\in\C{K}_+$. The key idea behind the trajectory attractor (see
\cite{chep-vish-1997}, \cite{chep-vish-2002}, \cite{chep-vish-2002-paper}, \cite{chep-vish-2011}) is, in contrast to the direct study of
the solutions asymptotic behavior, the investigation of the family of shift operators $\{T(t)\}_{t\geq 0}$ defined on $\C{K}^+$ by the formula
$$
(T(t)v)(s) = v(s+t).
$$
Before we pass to a theorem on existence of the trajectory attractor, which is the main result of this section, we recall some definitions and results
(see \cite{chep-vish-1997}, \cite{chep-vish-2002}, \cite{chep-vish-2002-paper}, \cite{chep-vish-2011}).

\noindent Let
$$\C{F}(0,T)=\{u\in L^2(0,T;V)\cap L^\infty(0,T;H): u'\in L^\frac{4}{3}(0,T;V^*)\}$$ and
$$\C{F}^{loc}_+=\{u\in L^2_{loc}(\mathbb{R}^+;V)\cap L^\infty_{loc}(\mathbb{R}^+;H) : u'\in L^\frac{4}{3}_{loc}(\mathbb{R}^+;V^*)\}.$$

\noindent Moreover we define $C_w([0,T];H)$ as the space of all functions $u:[0,T]\to H$ such that for any $\phi\in H$ the scalar product $(u(t),\phi)_H$ is continuous for $t\in [0,T]$. Note that $\C{F}(0,T)\subset C_w([0,T];H)$ and from the Lions-Magenes lemma (c.f., Lemma 1.4, Chapter~3 in \cite{temam-navier}, Theorem II,1.7 in \cite{chep-vish-2002} or Lemma 2.1 in \cite{chep-vish-2011}) it follows that for all $u\in \C{F}(0,T)$ we have
\begin{equation}\label{eq:lions-magenes}
\|u(t)\|_H\leq \|u\|_{L^\infty(0,T;H)}\quad\mbox{for all}\quad t \in [0,T].
\end{equation}
Furthermore, let us define the Banach space
$$
\C{F}^b_+ = \{u\in \C{F}^{loc}_+: \|u\|_{\C{F}^b_+}<\infty\},
$$
where the norm in $\C{F}^b_+$ is given by
$$
\|u\|_{\C{F}^b_+} = \sup_{h\geq 0}\|\Pi_{0,1} u(\cdot + h)\|_{\C{F}(0,1)}.
$$
In the above definition $\Pi_{0,1}u(\cdot)$ is a restriction of $u(\cdot)$ to the interval $(0,1)$, and
$$
||v||_{\C{F}(0,1)} =
\|v\|_{L^2(0,1;V)}+||v||_{L^\infty(0,1;H)} + \|v'\|_{L^{\frac{4}{3}}}(0,1;V^*).
$$
Finally, we define the topology by $\Theta^{loc}_+$ in the space $\C{F}^{loc}_+$ in the following way: the sequence $\{u_n\}_{n=1}^\infty\subset \C{F}^{loc}_+$ is said to converge to $u\in \C{F}^{loc}_+$ in the sense of $\Theta^{loc}_+$ we have
\begin{eqnarray}
&& u_n\to u\ \ \mbox{weakly in}\ \ L^2_{loc}(\br^+;V)\nonumber\\
&& u_n\to u\ \ \mbox{weakly-$*$ in}\ \ L^\infty_{loc}(\br^+;H)\nonumber\\
&& u_n'\to u'\ \ \mbox{weakly in}\ \ L^\frac{4}{3}_{loc}(\br^+;V^*)\nonumber.
\end{eqnarray}
Note (see for example \cite{chep-vish-2011}) that the topology $\Theta^{loc}_+$ is stronger then the topology of $C_w([0,T];H)$ for all $T\geq 0$ and hence from the fact that $v_n\to v$ in $\Theta^{loc}_+$ it follows that $v_n(t)\to v(t)$ weakly in $H$ for all $t\geq 0$.
\begin{definition}
A set $\C{P}\subset \C{K}^+$ is said to be \textit{absorbing} for the shift semigroup $\{T(t)\}$ if for any set $\mathfrak{B}\subset \C{K}^+$
bounded in $\C{F}^b_+$ there exists $\tau = \tau(\mathfrak{B})>0$ such that $T(t)\mathfrak{B} \subset\C{P}$ for all $t\geq \tau$
\end{definition}
\begin{definition}
A set $\C{P}\subset \C{K}^+$ is said to be \textit{attracting} for the shift semigroup $\{T(t)\}$ in the topology $\Theta^{loc}_+$ if for any set $\mathfrak{B}\subset \C{K}^+$ bounded in $\C{F}^b_+$ and for any neighborhood $\C{O}(\C{P})$ of $\C{P}$ in the topology $\Theta^{loc}_+$ there exists $\tau = \tau(\mathfrak{B},\C{O})>0$ such that $T(t)\mathfrak{B} \subset\C{O}(\C{P})$ for all $t\geq \tau$.
\end{definition}
\begin{definition}
A set $\mathfrak{U}\subset \C{K}^+$ is called a trajectory attractor of the shift semigroup $\{T(t)\}$ on $\C{K}^+$ in the topology
$\Theta^{loc}_+$ if
\begin{itemize}
\item[(a)] $\mathfrak{U}$ is bounded in $\C{F}^b_+$ and compact in the topology $\Theta^{loc}_+$,
\item[(b)] $\mathfrak{U}$ is an attracting set in the topology $\Theta^{loc}_+$,
\item[(c)] $\mathfrak{U}$ is strictly invariant, i.e. $T(t)\mathfrak{U}=\mathfrak{U}$ for any $t\geq 0$.
\end{itemize}
\end{definition}
In order to show the existence of a trajectory attractor for Problem \ref{problem_main} we will use the following theorem (see for instance Theorem 4.1 in \cite{chep-vish-2011}).
\begin{theorem}\label{theorem-vishik}
Assume that the trajectory set $\C{K}^+$ is contained in the space $\C{F}^b_+$ and that
\begin{equation}\label{eqn:semiinvariance}
T(t)\mathcal{K}^+\subset \mathcal{K}^+.
\end{equation}
Suppose that the semigroup $\{T(t)\}$ has an attracting set $\C{P}$ that is bounded in the norm of  $\C{F}^b_+$ and compact in
the topology $\Theta^{loc}_+$. Then the shift semigroup $\{T(t)\}$ has a trajectory attractor $\mathfrak{U}\subset \C{P}$.
\end{theorem}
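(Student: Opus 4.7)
The plan is to follow the standard Chepyzhov--Vishik $\omega$-limit construction and take as the candidate trajectory attractor the set
$$
\mathfrak{U} \ := \ \bigcap_{t\geq 0} \overline{\bigcup_{s\geq t} T(s)\C{P}}\,,
$$
with closures taken in $\Theta^{loc}_+$, and then to verify the three defining properties (a)--(c) of a trajectory attractor for this $\mathfrak{U}$. Two preliminary facts are needed. First, each shift operator $T(t):\C{F}^{loc}_+\to \C{F}^{loc}_+$ is continuous with respect to $\Theta^{loc}_+$; this reduces, via the change of variable $s\mapsto s+t$, to the behaviour of the weak and weak-$*$ pairings defining the three modes of convergence in $\Theta^{loc}_+$. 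Second, any set bounded in $\C{F}^b_+$ is relatively compact in $\Theta^{loc}_+$, by Banach--Alaoglu applied on each interval $[0,n]$ together with diagonal extraction, exactly as in the compactness arguments of the proof of Theorem~\ref{thm-existence-sol}.

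With these in hand, the inclusion $\mathfrak{U}\subset \C{P}$ follows because $\C{P}$ attracts itself: for every $\Theta^{loc}_+$-neighborhood $\C{O}$ of $\C{P}$ there is $\tau$ with $\bigcup_{s\geq \tau}T(s)\C{P}\subset \C{O}$, and since $\C{P}$ is compact, hence closed in the Hausdorff space $\Theta^{loc}_+$, the intersection defining $\mathfrak{U}$ is contained in $\C{P}$. Compactness of $\mathfrak{U}$ then follows because it is a $\Theta^{loc}_+$-closed subset of the compact $\C{P}$, and nonemptiness is obtained by fixing $p\in\C{P}$, using the semi-invariance $T(n)p\in\C{K}^+$ and extracting a $\Theta^{loc}_+$-convergent subsequence of $\{T(n)p\}_{n\geq 1}$, whose limit lies in $\mathfrak{U}$. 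Strict invariance $T(t)\mathfrak{U}=\mathfrak{U}$ combines continuity of $T(t)$ with the semigroup identity $T(t)T(t_n)=T(t+t_n)$: the inclusion $T(t)\mathfrak{U}\subset\mathfrak{U}$ is direct, and the reverse inclusion is obtained by writing a representative $u=\lim T(t_n)p_n$ and extracting, via the compactness provided by the attracting neighborhood of $\C{P}$, a subsequential limit $u'$ of $\{T(t_n-t)p_n\}$, which then lies in $\mathfrak{U}$ and satisfies $T(t)u'=u$.

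The step I expect to be the main obstacle is verifying that $\mathfrak{U}$ itself, not merely $\C{P}$, is attracting. I would argue by contradiction: supposing there exist a bounded $\mathfrak{B}\subset\C{K}^+$, a neighborhood $\C{O}(\mathfrak{U})$ in $\Theta^{loc}_+$, and sequences $t_n\to\infty$, $b_n\in\mathfrak{B}$ with $T(t_n)b_n\notin \C{O}(\mathfrak{U})$, the relative compactness delivered by the attracting property of $\C{P}$ extracts $T(t_n)b_n\to u^*$ with $u^*\notin \C{O}(\mathfrak{U})$. The key is to show $u^*\in\mathfrak{U}$: for every fixed $k$ the tail $T(t_n-k)b_n$ eventually lies in arbitrarily small neighborhoods of $\C{P}$, so by a diagonal extraction one obtains $T(t_n-k)b_n\to z_k\in\C{P}$ for every $k$ simultaneously; continuity of the fixed-time shift $T(k)$ then yields $u^*=T(k)z_k\in T(k)\C{P}\subset\overline{\bigcup_{s\geq k}T(s)\C{P}}$ for all $k$, whence $u^*\in\mathfrak{U}$, a contradiction. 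Once (a), (b) and (c) are established the conclusion of Theorem~\ref{theorem-vishik} follows.
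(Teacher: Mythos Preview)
The paper does not prove this theorem: it is stated as a known abstract result and cited from \cite{chep-vish-2011} (Theorem~4.1 there). Your proposal reconstructs the standard Chepyzhov--Vishik $\omega$-limit argument from that reference, and the outline is correct.

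A couple of small points worth tightening. First, in your argument that $\mathfrak{U}\subset\C{P}$ you use that a point lying in the closure of every neighbourhood of the compact set $\C{P}$ must lie in $\C{P}$ itself; this requires enough separation in $\Theta^{loc}_+$ to separate a point from a compact set, which holds here because on $\C{F}^b_+$-bounded sets the topology $\Theta^{loc}_+$ is metrizable (countable product of weak/weak-$*$ topologies on bounded sets of separable reflexive spaces). Second, in the attraction step you need the sequence $\{T(t_n)b_n\}$ to sit in a $\Theta^{loc}_+$-relatively compact set; this follows not from the attracting property of $\C{P}$ per se but from the simple observation that $\|T(t)v\|_{\C{F}^b_+}\leq\|v\|_{\C{F}^b_+}$, so $\bigcup_{t\geq 0}T(t)\mathfrak{B}$ is bounded in $\C{F}^b_+$ and hence relatively compact in $\Theta^{loc}_+$ by your preliminary fact. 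With these clarifications your proof goes through and matches the argument one finds in the cited source.
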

We are in position to formulate the main theorem of this section.
\begin{theorem}\label{thm:w_trajectory_attractor}
The shift semigroup $\{T(t)\}_{t\geq 0}$ defined on the set $\C{K}^+$ of solutions of  Problem \ref{problem_main} has a trajectory attractor
$\mathfrak{U}$ which is bounded in $\C{F}^b_+$ and compact in the topology $\Theta^{loc}_+$.
\end{theorem}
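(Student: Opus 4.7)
The plan is to apply Theorem \ref{theorem-vishik}, which reduces the task to verifying three conditions: $\C{K}^+ \subset \C{F}^b_+$, the semi-invariance $T(t)\C{K}^+ \subset \C{K}^+$, and the existence of an attracting set $\C{P}\subset\C{K}^+$ that is bounded in $\C{F}^b_+$ and compact in $\Theta^{loc}_+$.

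The semi-invariance is immediate, since Problem \ref{problem_main} is autonomous and inequality (\ref{eq:estimate_v_meaning}) is translation-invariant in time, so for $\tau\geq 0$ and $v\in \C{K}^+$ the shift $T(\tau)v$ solves the problem with initial datum $v(\tau)\in H$ and inherits the energy inequality. For the remaining two items we work from (\ref{eqn:estimate_v})--(\ref{eq:estimate_v_meaning}). Using the Poincar\'e inequality $\|z\|_H^2\leq C_P\|z\|^2$ on $V$ and treating Gronwall rigorously through the weak form (\ref{eq:estimate_v_meaning}), one obtains
\begin{equation*}
\|v(t)\|_H^2\leq e^{-\alpha t}\|v(0)\|_H^2 + R_0^2\bigl(1-e^{-\alpha t}\bigr), \qquad \alpha=\tfrac{2C_1}{C_P},\ \ R_0^2=\tfrac{C_P C_2(1+\|F\|^2_{V^\star})}{C_1}.
\end{equation*}
Integrating (\ref{eqn:estimate_v}) over $(t,t+1)$ bounds $\int_t^{t+1}\|v(s)\|^2\,ds$ uniformly in $t$, and Lemma \ref{thm:a-priori} controls $\|v'\|_{L^{4/3}(t,t+1;V^\star)}$ in terms of these. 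Hence $\C{K}^+\subset \C{F}^b_+$, and for any fixed $R>R_0$ the set
\begin{equation*}
\C{P}=\{v\in \C{K}^+ : \|v(t)\|_H\leq R\ \text{for all}\ t\geq 0\}
\end{equation*}
is bounded in $\C{F}^b_+$ and absorbs every $\C{F}^b_+$-bounded subset of $\C{K}^+$ in finite time.

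The main obstacle is to prove that $\C{P}$ is compact in $\Theta^{loc}_+$. Given $\{v_n\}\subset \C{P}$, its $\C{F}^b_+$-boundedness together with reflexivity of $L^2_{loc}(\br^+;V)$ and $L^{4/3}_{loc}(\br^+;V^\star)$ and weak-$*$ sequential compactness in $L^\infty_{loc}(\br^+;H)$ extracts a subsequence and a limit $v\in \C{F}^b_+$ with $v_n\to v$ in $\Theta^{loc}_+$. The content is to show $v\in\C{P}$, and this essentially reruns the limit passage in the proof of Theorem \ref{thm-existence-sol}. The Aubin--Lions lemma on each finite interval $(0,T)$ combined with trace continuity gives the strong convergence $(v_n)_N\to v_N$ in $L^2_{loc}(\br^+;L^2(\Gamma_0))$; the selections $\xi_n\in S^2_{\partial j((v_n)_N)}$ are bounded in $L^2_{loc}$ by $H(j)(b)$, and any weak limit $\xi$ lies in $S^2_{\partial j(v_N)}$ by the upper-semicontinuity/Fatou computation carried out around (\ref{eqn:derivative-weak}). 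The convective term is passed to the limit through identity (\ref{our-formula}) and the strong boundary convergence, exactly as in (\ref{nonlinear-new2}). The energy inequality (\ref{eq:estimate_v_meaning}) is transferred to $v$ by testing against $\psi\in C_0^\infty(t_1,t_2)$ with $\psi\geq 0$: Aubin--Lions supplies strong convergence $v_n\to v$ in $L^2_{loc}(\br^+;H)$, whence $\|v_n\|_H^2\to\|v\|_H^2$ in $L^1_{loc}$ by dominated convergence, while $\int_{t_1}^{t_2}\|v(t)\|^2\psi(t)\,dt\leq \liminf_n\int_{t_1}^{t_2}\|v_n(t)\|^2\psi(t)\,dt$ by weak lower semicontinuity of the $V$-norm. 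The pointwise bound $\|v(t)\|_H\leq R$ survives in the limit because $\Theta^{loc}_+$-convergence forces $v_n(t)\rightharpoonup v(t)$ weakly in $H$ for every $t\geq 0$, and $\|\cdot\|_H$ is weakly lower semicontinuous. Thus $v\in\C{P}$, the set $\C{P}$ is $\Theta^{loc}_+$-compact, and Theorem \ref{theorem-vishik} yields the trajectory attractor $\mathfrak{U}\subset\C{P}$ with all the asserted properties.
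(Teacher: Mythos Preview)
Your proof is correct and follows essentially the same route as the paper: verify the hypotheses of Theorem~\ref{theorem-vishik} by deriving the exponential decay of $\|v(t)\|_H$ from the weak energy inequality (\ref{eq:estimate_v_meaning}) via a Gronwall-type argument, construct an absorbing set, and show its $\Theta^{loc}_+$-compactness by weak sequential compactness together with closedness of $\C{K}^+$ (passing to the limit in the equation and in the energy inequality exactly as in the existence proof). The only cosmetic differences are that the paper defines the absorbing set through a bound on $\|v\|_{\C{F}^b_+}$ rather than a pointwise bound on $\|v(t)\|_H$, and it isolates the closedness of $\C{K}^+$ and the decay estimates as separate lemmas (Lemmas~\ref{lemma:closed} and~\ref{lemma:estimates_b}) rather than inlining them.
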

Before we pass to the proof of this theorem we will need two auxiliary lemmas.
\begin{lemma}\label{lemma:estimates_b}
Let $v\in \C{K}^+$. For all $h\geq 0$ the following estimates hold
\begin{eqnarray}
&&\|v\|^2_{L^2(h,h+1;V)} + \|v\|_{L^\infty(h,h+1;H)}^2\leq C_4 +  C_5 \|v\|_{L^\infty(0,1;H)}^2 e^{-C_6 h},\label{eqn:estimate_1}\\
&&\|v'\|^{\frac{4}{3}}_{L^{\frac{4}{3}}(h,h+1;V^*)}  \leq C_7 +  C_8 \|v\|_{L^\infty(0,1;H)}^\frac{8}{3} e^{-C_9 h},\label{eqn:estimate_2}
\end{eqnarray}
where $C_4, C_5, C_6, C_7, C_8$, and $C_9$ are positive constants independent of $h, v$.
\end{lemma}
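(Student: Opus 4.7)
The plan is to turn the distributional energy inequality (\ref{eq:estimate_v_meaning}) defining $\mathcal{K}^+$ into a pointwise integral energy inequality for $\|v(\cdot)\|_H^2$, then use the Poincar\'e inequality together with an integral Gronwall argument to get an exponentially decaying $L^\infty(H)$ bound in terms of $\|v\|_{L^\infty(0,1;H)}^2$, and finally deduce (\ref{eqn:estimate_2}) from Lemma \ref{thm:a-priori}.

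The first step is to notice that (\ref{eq:estimate_v_meaning}) is precisely the statement that $\frac{1}{2}\frac{d}{dt}\|v(t)\|_H^2+C_1\|v(t)\|^2 \leq C_2(1+\|F\|^2_{V^*})$ holds in $\mathcal{D}'(0,\infty)$. Since the defect is a non-negative distribution, hence a Radon measure, $t\mapsto\|v(t)\|_H^2$ admits a BV representative satisfying, for a.e.\ $0\leq s\leq t$,
\[
\tfrac{1}{2}\|v(t)\|_H^2 + C_1\int_s^t\|v(\tau)\|^2\,d\tau \leq \tfrac{1}{2}\|v(s)\|_H^2 + K(t-s),
\]
with $K:=C_2(1+\|F\|^2_{V^*})$. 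Combined with the Poincar\'e inequality $\|v\|_H^2\leq \lambda_1^{-1}\|v\|^2$ on $V$ (valid because $V$-functions vanish on $\Gamma_1$) and a standard integral-Gronwall argument, this gives
\[
\|v(t)\|_H^2 \leq \|v(s)\|_H^2\, e^{-2C_1\lambda_1(t-s)} + \tfrac{K}{C_1\lambda_1} \qquad \text{for a.e. } s\leq t.
\]
For $h\geq 1$ and $t\in[h,h+1]$ we pick $s\in[0,1]$ from the full-measure subset on which both the preceding estimate holds and $\|v(s)\|_H\leq \|v\|_{L^\infty(0,1;H)}$; this produces the $L^\infty(h,h+1;H)$ part of (\ref{eqn:estimate_1}). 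For $h\in[0,1)$ the bound is immediate from the integral energy inequality on $[0,2]$, since the factor $e^{-C_6 h}$ is bounded away from zero there. The $L^2(V)$ piece of (\ref{eqn:estimate_1}) then follows from the integral energy inequality with $s=h$, $t=h+1$, after absorbing $\|v(h)\|_H^2$ via the $L^\infty(H)$ bound just proved.

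For (\ref{eqn:estimate_2}), apply Lemma \ref{thm:a-priori}, raise to the power $\tfrac{4}{3}$, and use $(a+b+c)^{4/3}\leq C(a^{4/3}+b^{4/3}+c^{4/3})$ to obtain
\[
\|v'(t)\|^{4/3}_{V^\star} \leq C\bigl(1 + \|v(t)\|^{4/3} + \|v(t)\|_H^{2/3}\|v(t)\|^2\bigr).
\]
Integrating over $[h,h+1]$, H\"older bounds the middle term by $\|v\|^{4/3}_{L^2(h,h+1;V)}$ and the last by $\|v\|_{L^\infty(h,h+1;H)}^{2/3}\|v\|_{L^2(h,h+1;V)}^2$; substituting (\ref{eqn:estimate_1}) and one more application of $(a+b)^p\leq C_p(a^p+b^p)$ delivers (\ref{eqn:estimate_2}) with $C_9=\tfrac{4}{3}C_6$. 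The single delicate step of the entire argument is the rigorous passage from the distributional energy inequality (\ref{eq:estimate_v_meaning}) to its pointwise integral form for the BV representative of $\|v(\cdot)\|_H^2$, given that a priori one only has $\|v(\cdot)\|_H^2\in L^\infty_{loc}(\mathbb{R}^+)$ and $v\in C_w(\mathbb{R}^+;H)$; once that is in hand everything else is routine Gronwall and H\"older bookkeeping.
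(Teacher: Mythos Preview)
Your proposal is correct and follows essentially the same route as the paper: Poincar\'e plus a Gronwall argument applied to the distributional energy inequality to get the exponentially decaying $L^\infty(H)$ bound, then the integral energy inequality on $[h,h+1]$ for the $L^2(V)$ bound, and finally Lemma~\ref{thm:a-priori} raised to the $4/3$ power for (\ref{eqn:estimate_2}). The only cosmetic differences are that the paper outsources the passage from (\ref{eq:estimate_v_meaning}) to the pointwise integral inequality to Lemma~9.2 and Corollary~9.2 of \cite{chep-vish-1997} rather than sketching the Radon-measure/BV argument, and it explicitly uses the lower semicontinuity of $t\mapsto\|v(t)\|_H^2$ (from $v\in C_w(\mathbb{R}^+;H)$) together with continuity in $h$ of both sides to upgrade the ``a.e.'' inequalities to hold for every $t$ and every $h\geq 0$, a point you correctly flagged as the one genuinely delicate step.
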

\begin{proof}
Let us fix $h\geq 0$ and $v\in \C{K}^+$. We observe that, since for all $u\in V$ we have $\lambda_1 \|u\|^2_H\leq \|u\|^2$ with the constant $\lambda_1>0$, it follows from (\ref{eq:estimate_v_meaning}), that for all $0\leq t_1<t_2$ and $\psi\in C^\infty_0(t_1,t_2)$ with $\psi\geq 0$,
\begin{equation}\label{eq:estimate_v_meaning_h}
-\frac{1}{2}\int_{t_1}^{t_2}\|v(t)\|_H^2\psi'(t)\ dt + C_1\lambda_1\int_{t_1}^{t_2}\|v(t)\|_H^2\psi(t)\ dt\leq C_2(1+\|F\|^2_{V^*})\int_{t_1}^{t_2}\psi(t)\ dt.
\end{equation}
We are in position to use Lemma 9.2 from \cite{chep-vish-1997} to deduce that there exists a set $Q\subset \br^+$ of zero measure such that for all
$t,\tau \in \br^+\setminus Q$ we have
\begin{equation}\label{eqn:from_gronwall}
\|v(t)\|_H^2e^{C_1\lambda_1 (t-\tau)} - \|v(\tau)\|_H^2 \leq \frac{e^{C_1\lambda_1(t-\tau)}-1}{C_1\lambda_1}C_2(1+\|F\|^2_{V^*}).
\end{equation}
Now since the function $t\to v(t)$ is weakly continuous, it follows that $t\to \|v(t)\|_H^2$ is lower semicontinuous and (\ref{eqn:from_gronwall}) holds for all $t\geq \tau$. Hence, for all $t > 0$ we can find $\tau\in (0,\min\{1,t\})$ such that
\begin{equation}\label{eqn:from_gronwall_1}
\|v(t)\|_H^2e^{C_1\lambda_1 (t-\tau)}  \leq \|v\|_{L^\infty(0,1;H)}^2+\frac{e^{C_1\lambda_1(t-\tau)}}{C_1\lambda_1}C_2(1+\|F\|^2_{V^*}),
\end{equation}
and moreover, for all $t > 0$, we have
\begin{equation}\label{eqn:from_gronwall_2}
\|v(t)\|_H^2  \leq e^{C_1\lambda_1 (1-t)} \|v\|_{L^\infty(0,1;H)}^2+\frac{C_2(1+\|F\|^2_{V^*})}{C_1\lambda_1}.
\end{equation}
Now by Corollary 9.2 in \cite{chep-vish-1997} it follows from (\ref{eq:estimate_v_meaning}) that for almost all $h > 0$ we have
$$
\frac{1}{2}\|v(h+1)\|_H^2+C_1\int^{h+1}_h\|v(t)\|^2\ dt \leq C_2(1+\|F\|_{V^*}^2)+\frac{1}{2}\|v(h)\|_H^2.
$$
Using (\ref{eqn:from_gronwall_2}) we obtain, for a.e. $h > 0$,
$$
\int^{h+1}_h\|v(t)\|^2\ dt \leq \frac{C_2}{C_1}(1+\|F\|_{V^*}^2)+\frac{C_2(1+\|F\|^2_{V^*})}{2C_1^2\lambda_1} +  \frac{\|v\|_{L^\infty(0,1;H)}^2 e^{C_1\lambda_1(1-h)} }{2 C_1},
$$
and the estimate (\ref{eqn:estimate_1}) follows for all $h\geq 0$ since both left- and right-hand side of the above inequality are continuous functions of $h$.
Now using
(\ref{eqn:estimate_vprime}) we get for a.e. $t\in \br^+$
$$
\|v'(t)\|_{V^*}^{\frac{4}{3}}\leq C_{10}\left(1+\left(1+\|v(t)\|_H^{\frac{2}{3}}\right)\|v(t)\|^2\right),
$$
where $C_{10}>0$. Integrating this inequality from $h$ to $h+1$ we obtain
$$
\int_{h}^{h+1}\|v'(t)\|_{V^*}^{4/3}\ dt\leq
C_{10} + C_{10}\left(1+||v(t)||_{L^\infty(h,h+1;H)}^{\frac{2}{3}}\right)\int_{h}^{h+1}\|v(t)\|^2\ dt.
$$
Inequality (\ref{eqn:estimate_2}) follows from an application of (\ref{eqn:estimate_1}).
\end{proof}
\begin{lemma} \label{lemma-usc}The multifunction $S^2_{\partial j(\cdot)}:L^2(\Gamma_0)\to 2^{L^2(\Gamma_0)}$ is strong-weak upper semicontinuous and has nonempty, bounded, closed, convex and hence weakly compact values.
\end{lemma}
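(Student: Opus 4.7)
The plan is to verify the structural properties pointwise on $\br$ and lift them to $L^2(\Gamma_0)$, and then establish the strong-weak upper semicontinuity by a Fatou-type argument for the Clarke directional derivative, entirely analogous to the one used to identify the weak limit $\xi$ in the proof of Theorem~\ref{thm-existence-sol}.

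First I would invoke the standard facts for a locally Lipschitz $j:\br\to\br$: $\partial j(r)$ is a nonempty, convex, compact subset of $\br$ for every $r$, and the pointwise multifunction $\partial j:\br\to 2^\br$ is upper semicontinuous with closed graph. Together with measurability of $u\in L^2(\Gamma_0)$, the Kuratowski--Ryll-Nardzewski measurable selection theorem yields a measurable selection of $x\mapsto\partial j(u(x))$, and the growth condition $H(j)(b)$ places every such selection in $L^2(\Gamma_0)$ with
$$\|\xi\|_{L^2(\Gamma_0)}\leq c_1\,m(\Gamma_0)^{1/2}+c_2\,\|u\|_{L^2(\Gamma_0)}.$$
This gives nonemptiness and boundedness; convexity is pointwise from convexity of $\partial j(u(x))$; closedness in $L^2(\Gamma_0)$ follows by extracting an a.e.\ convergent subsequence and using closedness of each $\partial j(u(x))$. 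Weak compactness is then automatic by Eberlein--\v{S}mulian, since $L^2(\Gamma_0)$ is reflexive.

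For the upper semicontinuity, the weak compactness of the values together with metrizability of the weak topology on bounded subsets of $L^2(\Gamma_0)$ reduces the task to the sequential characterization: assuming $u_n\to u$ strongly in $L^2(\Gamma_0)$ and $\xi_n\in S^2_{\partial j(u_n)}$ with $\xi_n\tow\xi$ weakly in $L^2(\Gamma_0)$, show $\xi\in S^2_{\partial j(u)}$. Passing to a subsequence so that $u_n\to u$ a.e.\ and $|u_n|$ is dominated by a fixed $L^2$ function, I fix any $w\in L^\infty(\Gamma_0)$ and pass to the limit in the integrated pointwise inequality $\xi_n(x)w(x)\leq j^0(u_n(x);w(x))$. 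The left-hand side tends to $\int_{\Gamma_0}\xi w\,d\sigma$ by weak convergence. On the right-hand side, $H(j)(b)$ provides an $L^1$ majorant for $j^0(u_n;w)$, so a reverse Fatou argument combined with upper semicontinuity of $(r,s)\mapsto j^0(r;s)$ yields
$$\limsup_{n\to\infty}\int_{\Gamma_0}j^0(u_n;w)\,d\sigma\leq\int_{\Gamma_0}j^0(u;w)\,d\sigma.$$
Hence $\int_{\Gamma_0}\xi w\,d\sigma\leq\int_{\Gamma_0}j^0(u;w)\,d\sigma$ for every $w\in L^\infty(\Gamma_0)$; a standard localization of $w$ produces $\xi(x)s\leq j^0(u(x);s)$ for every $s\in\br$ and a.e.\ $x\in\Gamma_0$, which by definition of the generalized gradient gives $\xi(x)\in\partial j(u(x))$ a.e.

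The principal technical point is the reverse Fatou step for the Clarke directional derivative: one needs an integrable majorant controlling $j^0(u_n(x);w(x))$ uniformly in $n$, together with upper semicontinuity of $(r,s)\mapsto j^0(r;s)$. Both ingredients are secured by the locally Lipschitz character of $j$ and the growth bound $H(j)(b)$, and the computation is essentially the unregularized analogue of the limit argument performed just after~(\ref{eqn:derivative-weak}) in the proof of Theorem~\ref{thm-existence-sol}, so no new machinery beyond that template is required.
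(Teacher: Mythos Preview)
Your proposal is correct and follows essentially the same route as the paper: pointwise properties of $\partial j$ lifted via the growth condition $H(j)(b)$ for nonemptiness, boundedness, convexity and closedness of values, and then the sequential strong--weak closed-graph argument using the integrated inequality $\int\xi_n w\leq\int j^0(u_n;w)$, a reverse Fatou step with the $L^1$ majorant from $H(j)(b)$, and upper semicontinuity of $j^0$ to conclude $\xi(x)\in\partial j(u(x))$ a.e. The only cosmetic differences are that you invoke Kuratowski--Ryll-Nardzewski explicitly for the measurable selection (the paper glosses over this) and you phrase the reduction to sequences via metrizability of the weak topology on bounded sets, whereas the paper cites Proposition~4.1.11 in \cite{DMP1}; the substance of the argument is identical.
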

\begin{proof}
First observe that from the growth condition $H(j)(b)$ it follows that if $\xi(x)\in \partial j(u(x))\ \mbox{for a.e.}\ x\in \Gamma_0$ for $u\in L^2(\Gamma_0)$,
then $\xi \in L^2(\Gamma_0)$. Hence, and from the fact that $\partial j(s)$ is nonempty for all $s\in \br$ it follows that $S^2_{\partial j(u(\cdot))}$ is nonempty for all $u\in L^2(\Gamma_0)$. From $H(j)(b)$ it also follows that this set is bounded, since $\|\xi\|^2_{L^2(\Gamma_0)}\leq 2c_1^2\sigma(\Gamma_0)+2c_2^2\|u\|_{L^2(\Gamma_0)}$. Moreover from the fact that $\partial j(s)$ is convex for all $s\in \br$ it follows that $S^2_{\partial j(u(\cdot))}$ is convex for all $u\in L^2(\Gamma_0)$. We will demonstrate that $S^2_{\partial j(u(\cdot))}$ is a closed set for all $u\in L^2(\Gamma_0)$. Let $\xi_n\in S^2_{\partial j(u(\cdot))}$ and $\xi_n\to \xi$ strongly in $L^2(\Gamma_0)$. Hence, for a subsequence $\xi_n(x)\to \xi(x)$ for a.e. $x\in \Gamma_0$. Since $\partial j$ is upper semicontinuous multifunction it follows that $\xi(x)\in \partial j(u(x))$ for a.e. $x\in \Gamma_0$ and hence $\xi \in S^2_{\partial j(u(\cdot))}$. We have shown that $S^2_{\partial j(\cdot)}$ has nonempty and weakly compact values. In order to finish the proof of upper semicontinuity, in view of Proposition 4.1.11 in \cite{DMP1} it is enough to show that if $u_n\to u$ strongly in $L^2(\Gamma_0)$ and $\xi_n\in S^2_{\partial j(u_n(\cdot))}$ then, for a subsequence, $\xi_n\to \xi$ weakly in $L^2(\Gamma_0)$ with $\xi\in S^2_{\partial j(u(\cdot))}$. Let us choose $u_n\to u$ strongly in $L^2(\Gamma_0)$ and $\xi_n\in S^2_{\partial j(u_n(\cdot))}$.
From the growth condition $H(b)(ii)$ if follows that for a subsequence, $\xi_n\to \xi$ weakly in $L^2(\Gamma_0)$. The argument from now on
will follow the lines of the proof of Theorem 4.1 in \cite{naniewicz-2004}. For a subsequence we have $u_n(x)\to u(x)$ for a.e. $x\in \Gamma_0$ with $|u_n(x)|\leq h(x)$ for a.e. $x\in \Gamma_0$ with $h\in L^2(\Gamma_0)$. For any $z\in L^\infty(\Gamma_0)$, by the definition of the Clarke subdifferential, there holds
\begin{equation}\label{temp_eq_1}
\int_{\Gamma_0}\xi_n(x) z(x)d\sigma(x)
\leq\int_{\Gamma_0}j^{0}(u_n(x);z(x))d\sigma(x).
\end{equation}
Passing to the limit in  (\ref{temp_eq_1}) and using the weak
convergence of $\xi_n$ we have
\begin{equation}\label{temp_eq_2}
\int_{\Gamma_0}\xi(x)z(x)d\sigma(x)=\lim_{n\to\infty}\int_{\Gamma_0}\xi_n(x)
z(x)d\sigma(x)
\leq\limsup_{n\to\infty}\int_{\Gamma_0}j^{0}(u_n(x);z(x))d\sigma(x).
\end{equation}
Moreover, we have for a.e. $x\in \Gamma_0$
\begin{align}\label{temp_eq_3}
&j^0(u_n(x);z(x))=\sup_{\eta\in\partial j(u_n(x))}\eta
z(x)\leq\|z\|_{L^{\infty}(\Gamma_0)}\sup_{\eta\in\partial
j(u_n(x))}|\eta|\leq\nonumber\\
&\|z\|_{L^{\infty}(\Gamma_0)}(c_1+c_2|u_n(x)|)\leq \|z\|_{L^{\infty}(\Gamma_0)}(c_1+c_2h(x)),
\end{align}
where we used the growth condition $H(j)(b)$. We are in position to invoke the Fatou lemma in
(\ref{temp_eq_2}) which gives
\begin{equation}\label{temp_eq_4}
\int_{\Gamma_0}\xi(x)\cdot z(x)d\sigma(x)
\leq\int_{\Gamma_0}\limsup_{n\to\infty}j^{0}(u_n(x);z(x))d\sigma(x).
\end{equation}
From (\ref{temp_eq_4}) and the upper semicontinuity of the Clarke
directional derivative, we obtain
\begin{equation}\label{temp_eq_5}
\int_{\Gamma_0}\xi(x)\cdot z(x)d\sigma(x)
\leq\int_{\Gamma_0}j^{0}(u(x);z(x))d\sigma(x).
\end{equation}
Since in (\ref{temp_eq_5}) $z$ is arbitrary, it easily follows that
\begin{equation}\label{temp_eq_6}
\xi(x)\in\partial j(u(x))
\,\,\,\,a.e.\,\,in\,\,\Gamma_0,
\end{equation}
and the assertion if proved.\end{proof}

\begin{lemma}\label{lemma:closed}
The set $\C{K}^+$ is closed in the topology $\Theta^{loc}_+$.
\end{lemma}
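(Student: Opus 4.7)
The plan is to take a sequence $\{v_n\}\subset \mathcal{K}^+$ with $v_n\to v$ in the topology $\Theta^{loc}_+$ and verify two things: (i) $v$ solves Problem \ref{problem_main}, and (ii) $v$ satisfies the energy inequality (\ref{eq:estimate_v_meaning}). Both verifications are passages to the limit using exactly the convergences encoded in $\Theta^{loc}_+$, reinforced by a compactness argument to promote weak convergence in $L^2_{loc}(\br^+;V)$ to strong convergence on the boundary.

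First I would fix $T>0$ and work on $(0,T)$. For each $n$ let $\xi_n\in S^2_{\partial j((v_n)_N(\cdot,t))}$ be the selection associated with $v_n$. By $H(j)(b)$, the sequence $\xi_n$ is bounded in $L^2(0,T;L^2(\Gamma_0))$, so we may extract $\xi_n\to \xi$ weakly in $L^2(0,T;L^2(\Gamma_0))$. Using $\Theta^{loc}_+$ together with the Lions--Aubin lemma applied to the triple $V\subset H^{1-\delta}(\Omega;\br^2)\subset V^*$ (compact $\subset$), I obtain $v_n\to v$ strongly in $L^2(0,T;H^{1-\delta}(\Omega;\br^2))$, and hence by continuity of the trace, $(v_n)_N\to v_N$ strongly in $L^2(0,T;L^2(\Gamma_0))$.

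Now I would pass to the limit in the variational identity tested against $z\phi$ with $z\in V$ and $\phi\in C([0,T])$. The linear terms $A$ and $G$ pass by weak $L^2(V)$ convergence; the multivalued term passes by weak $L^2(L^2(\Gamma_0))$ convergence of $\xi_n$; and the nonlinear convective term is handled via the rewriting (\ref{newformula1})--(\ref{our-formula}) exactly as in the proof of Theorem \ref{thm-existence-sol}, using strong trace convergence to treat the surface integral and the standard Navier--Stokes argument for the interior term. To identify $\xi\in S^2_{\partial j(v_N(\cdot,t))}$ I would apply Lemma \ref{lemma-usc}: since $S^2_{\partial j(\cdot)}$ is strong-weak upper semicontinuous with closed convex values, the strong convergence $(v_n)_N\to v_N$ together with weak convergence $\xi_n\to\xi$ and $\xi_n\in S^2_{\partial j((v_n)_N(\cdot,t))}$ forces $\xi\in S^2_{\partial j(v_N(\cdot,t))}$ for a.e.~$t$.

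The main obstacle, and the only place where some care is needed, is identifying the limit of $\xi_n$ as an element of $S^2_{\partial j(v_N(\cdot,\cdot))}$; once Lemma \ref{lemma-usc} is invoked this reduces to a routine Mazur/convexity argument on $\Gamma_0\times(0,T)$. Finally, the energy inequality (\ref{eq:estimate_v_meaning}) is inherited in the limit by the argument already used at the end of the proof of Theorem \ref{thm-existence-sol}: strong convergence $v_n\to v$ in $L^2(t_1,t_2;H)$ (via Lions--Aubin) gives pointwise a.e.\ convergence of $\|v_n(t)\|_H^2$ along a subsequence, Lebesgue dominated convergence handles the term with $\psi'$ (using (\ref{eqn:estimate_1}) of Lemma \ref{lemma:estimates_b} as integrable majorant), and weak lower semicontinuity of the $V$-norm controls the $C_1\int\|v(t)\|^2\psi(t)\,dt$ term. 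Together these show $v\in\mathcal{K}^+$, so $\mathcal{K}^+$ is closed in $\Theta^{loc}_+$.
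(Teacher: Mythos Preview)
Your proposal is correct and follows essentially the same route as the paper's proof: Lions--Aubin compactness to upgrade to strong trace convergence, passage to the limit in the linear and convective terms as in Theorem \ref{thm-existence-sol}, identification of the weak limit $\xi$ via Lemma \ref{lemma-usc}, and recovery of the energy inequality by dominated convergence plus weak lower semicontinuity. The only cosmetic differences are that the paper invokes the Aubin--Cellina convergence theorem (rather than a direct Mazur argument) to conclude $\xi(t)\in S^2_{\partial j(v_N(t))}$ for a.e.\ $t$, and obtains the majorant for $\|v_n(t)\|_H^2\psi'(t)$ directly from boundedness of weak-$*$ convergent sequences in $L^\infty(t_1,t_2;H)$ rather than via Lemma \ref{lemma:estimates_b}.
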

\begin{proof}
Assume that for a sequence $\{v_n\}_{n=1}^\infty\subset \C{K}^+$ we have
\begin{eqnarray}
&& v_n\to v\ \ \mbox{weakly in}\ \ L^2_{loc}(\br^+;V)\nonumber\\
&& v_n\to v\ \ \mbox{weakly-$*$ in}\ \ L^\infty_{loc}(\br^+;H)\nonumber\\
&& v_n'\to v'\ \ \mbox{weakly in}\ \ L^\frac{4}{3}_{loc}(\br^+;V^*)\nonumber.
\end{eqnarray}
We need to show that $v$ satisfies (\ref{def-sol}) and (\ref{eq:estimate_v_meaning}). Since $\{v_n\}_{n=1}^\infty\subset \C{K}^+$, we have, for all $n\in\mathbb{N}$ and $z\in V$,
\begin{eqnarray*}
   &&  \langle v_n'(t) + Av_n(t) + B[v_n(t)],z \rangle + (\xi_n(t),z_N)_{L^2(\Gamma_0)}
     = \langle F, z \rangle +  \langle G(v_n(t)), z \rangle,\\
   &&  \xi_n(t)\in S^2_{\partial j((v_n)_N(\cdot,t))}, \nonumber
\end{eqnarray*}
a.e. $t\in \mathbb{R}^+$. Passing to the limit in terms with $A, B, G$ is analogous to that in the proof of Theorem \ref{thm-existence-sol} (see also \cite{chep-vish-2002}, \cite{temam-navier}, \cite{temam-infty}, \cite{Migorski-Ochal2007}).

In order to pass to the limit in the multivalued term observe that from $H(j)(b)$ it follows that for a.e. $t\in \br^+$
$$
\|\xi_n(t)\|^2_{L^2(\Gamma_0)}\leq 2c_1^2m(\Gamma_0)+2c_2\|\gamma\|^2\|v_n(t)\|^2.
$$
Hence, after integration in the time variable we get, for all $T\in \br^+$,
$$
\|\xi_n\|^2_{L^2(0,T;L^2(\Gamma_0))}\leq 2Tc_1^2m(\Gamma_0)+2c_2\|\gamma\|^2\|v_n\|^2_{L^2(0,T;V)}.
$$
It follows that there exists $\xi\in L^2_{loc}(\br^+;L^2(\Gamma_0))$ such that, for a subsequence,
$$
\xi_n\to \xi\ \ \mbox{weakly in}\ \ L^2_{loc}(\br^+;L^2(\Gamma_0)).
$$
As in the proof of Theorem \ref{thm-existence-sol}, we have
$$
(v_n)_N\to v_N\ \ \mbox{strongly in}\ \ L^2_{loc}(\br^+;L^2(\Gamma_0)),
$$
and, for a subsequence,
$$
(v_n)_N(t)\to v_N(t)\ \ \mbox{strongly in}\ \ L^2(\Gamma_0)
$$
for a.e. $t\in \br^+$. By Lemma \ref{lemma-usc} we are in position to invoke the convergence theorem of Aubin and Cellina (see for example Theorem 1.4.1 in \cite{aubin-cellina}) and deduce that
$$
\xi(t)\in \partial j(v_N(t)).
$$
Hence we can pass to the limit in the term $(\xi_n(t),z_N)_{L^2(\Gamma_0)}$. It remains to show that $v$ satisfies (\ref{eq:estimate_v_meaning}).
The argument is similar to that in the proof of Theorem \ref{thm-existence-sol}. We choose $0\leq t_1<t_2$ and $\psi\in C^\infty_0(t_1,t_2)$. We have, after possible refining to a subsequence, $\|v_n(t)\|_H^2\psi'(t)\to \|v(t)\|_H^2\psi'(t)$ for a.e. $t\in (t_1,t_2)$. Moreover, since weakly-* convergent sequences in $L^\infty(t_1,t_2;H)$ are bounded in this space it follows that there exists a majorant for $\|v_n(t)\|_H^2\psi'(t)$.
Hence, by the Lebesgue dominated convergence theorem and since $v_n(t)(\psi(t))^{\frac{1}{2}}\to v(t)(\psi(t))^{\frac{1}{2}}$ weakly in $L^2(t_1,t_2;V)$, we can pass to the limit in (\ref{eq:estimate_v_meaning}) written for $v_n$, which completes the proof.
\end{proof}
\begin{proof} \textit{(of Theorem \ref{thm:w_trajectory_attractor})}
Observe that from Lemma \ref{lemma:estimates_b} it follows that for every $v\in \C{K}^+$ we have $\|v\|_{\C{F}^b_+} < \infty$.
Note that since the problem is autonomous, for any $v\in \C{K}^{+}$ and any $h\in\br^+$, the function $v_h$, defined as $v_h(t)=v(t+h)$ for all $t\geq 0$,
belongs to $\C{K}^+$ and hence (\ref{eqn:semiinvariance}) holds.

Now let us estimate $\|T(s)v\|_{\C{F}^b_+}$ for $v\in \C{K}^+$ and $s\geq 0$. We have,
$$
\|T(s)v\|_{\C{F}^b_+} = \sup_{h\geq s}\left\{\sqrt{\int^{h+1}_h\|v(t)\|^2\ dt}+
||v(t)||_{L^\infty(h,h+1;H)}
+\left(\int^{h+1}_h\|v'(t)\|_{V^*}^\frac{4}{3}\ dt\right)^{\frac{3}{4}}\right\}.
$$
Using Lemma \ref{lemma:estimates_b} we get
$$
\|T(s)v\|_{\C{F}^b_+} \leq \sup_{h\geq s}\left\{2\sqrt{C_4+C_5\|v\|_{L^\infty(0,1;H)}^2e^{-C_6h}}
+\left(C_7+C_8\|v\|_{L^\infty(0,1;H)}^\frac{8}{3} e^{-C_9h}\right)^{\frac{3}{4}}\right\}.
$$
By a simple calculation we obtain, for all $s\in \br^+$,
\begin{equation}\label{eqn:absorbing}
\|T(s)v\|_{\C{F}^b_+} \leq R_0+C\|v\|_{L^\infty(0,1;H)}^\beta e^{-\delta s} \leq R_0+C\|v\|_{\C{F}(0,1)}^\beta e^{-\delta s},
\end{equation}
where $C,R_0,\beta,\delta>0$ do not depend on $s,v$.

Now we define the set $\C{P}$ as
$$
\C{P}=\{v\in \C{K}^+: \|v\|_{\C{F}^b_+}\leq 2R_0\}.
$$
We will show that $\C{P}$ is absorbing (and hence also attracting) for $\{T(t)\}$. Let $\mathfrak{B}$ be bounded in $\C{F}^b_+$.
Hence $\mathfrak{B}$ is bounded in $\C{F}(0,1)$. Let $\|v\|_{\C{F}(0,1)}\leq R$ for $v\in \mathfrak{B}$.
Now we choose $s_0>0$ such that $C R^\beta e^{-\delta s_0}\leq R_0$.
From (\ref{eqn:absorbing}) we have for $s\geq s_0$ that
$$
\|T(s)v\|_{\C{F}^b_+} \leq 2R_0,
$$
and we deduce that $\C{P}$ is absorbing.

It suffices to show that $\C{P}$ is compact in the topology $\Theta^{loc}_+$. The fact that it is relatively compact follows from its boundedness
and basic properties of weak compactness in reflexive Banach spaces. It remains to show that it is closed. Let $v_n\stackrel{\Theta^{loc}_+}{\to} v$ and
$\{v_n\}\subset \C{P}$. From the weak lower semicontinuity of the norm it follows that
$$
\|v\|_{\C{F}^b_+} \leq \liminf_{n\to\infty}\|v_n\|_{\C{F}^b_+}\leq 2R_0.
$$
Moreover from Lemma \ref{lemma:closed} it follows that $v\in \C{K}^+$. Thus $v\in \C{P}$ and the proof is complete.
\end{proof}

Now we show that any section of the trajectory attractor is a weak global attractor. We will start from the definition of a weak global attractor (c.f., e.g., \cite{chep-vish-2011}).
\begin{definition}
A set $\C{A}\subset H$ is called a \textit{weak global attractor} of the set $\C{K}^+$ if it is weakly compact in $H$
and the following properties hold
\begin{itemize}
\item[$(i)$] for any set $\mathfrak{B}\subset \C{K}^+$ bounded in the norm of $\C{F}^b_+$ its section $\mathfrak{B}(t)$ is attracted
to $\C{A}$ in the weak topology of $H$ as $t\to\infty$, that is, for any neighbourhood $\C{O}_w(\C{A})$ of $\C{A}$ in the weak topology of
$H$ there exists a time $\tau=\tau(\mathfrak{B},\C{O}_w)$ such that
$$
\mathfrak{B}(t)\subset \C{O}_w(\C{A})\ \ \mbox{for all}\ t\geq\tau,
$$
\item[$(ii)$] $\C{A}$ is the minimal weakly closed set in $H$ that attracts the sections of all bounded sets in $\C{K}^+$ in the weak topology
of $H$ as $t\to\infty$.
\end{itemize}
\end{definition}
We prove the following Theorem
\begin{theorem}
The set $\C{A}\subset H$ defined as $\C{A}=\mathfrak{U}(0)$ is a weak global attractor of $\C{K}^+$.
\end{theorem}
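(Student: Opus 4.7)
The plan is to verify in sequence the three defining properties of a weak global attractor: boundedness and (weak) closedness of $\mathcal{A}=\mathfrak{U}(0)$ in $H$, the weak attraction of sections of bounded sets, and minimality. Throughout I will use the properties of the trajectory attractor $\mathfrak{U}$ guaranteed by Theorem \ref{thm:w_trajectory_attractor} (boundedness in $\mathcal{F}^b_+$, compactness in $\Theta^{loc}_+$, and strict invariance $T(t)\mathfrak{U}=\mathfrak{U}$), together with the key observation made in the paper that convergence in $\Theta^{loc}_+$ implies pointwise weak convergence in $H$ at every $t\geq 0$.

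For boundedness, since $\mathfrak{U}$ is bounded in $\mathcal{F}^b_+$ and the Lions-Magenes estimate (\ref{eq:lions-magenes}) gives $\|u(0)\|_H\leq \|u\|_{L^\infty(0,1;H)}\leq \|u\|_{\mathcal{F}^b_+}$, the set $\mathcal{A}=\mathfrak{U}(0)$ is bounded in $H$. For weak closedness, take a sequence $y_n=u_n(0)\in\mathcal{A}$ with $u_n\in \mathfrak{U}$ and $y_n\to y$ weakly in $H$. By $\Theta^{loc}_+$-compactness of $\mathfrak{U}$, a subsequence $u_{n_k}\to u\in\mathfrak{U}$ in $\Theta^{loc}_+$, and hence $u_{n_k}(0)\to u(0)$ weakly in $H$. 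Uniqueness of weak limits gives $y=u(0)\in\mathcal{A}$.

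For the attraction property, suppose for contradiction there exist a set $\mathfrak{B}\subset\mathcal{K}^+$ bounded in $\mathcal{F}^b_+$, a weak neighborhood $\mathcal{O}_w(\mathcal{A})$, and sequences $v_n\in\mathfrak{B}$, $t_n\to\infty$ with $v_n(t_n)\notin\mathcal{O}_w(\mathcal{A})$. Set $u_n=T(t_n)v_n\in\mathcal{K}^+$, so $u_n(0)=v_n(t_n)$. Because $\mathfrak{U}$ is $\Theta^{loc}_+$-attracting and $\Theta^{loc}_+$-compact, a subsequence $u_{n_k}\to u$ in $\Theta^{loc}_+$ with $u\in\mathfrak{U}$. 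Consequently $v_{n_k}(t_{n_k})=u_{n_k}(0)\to u(0)\in\mathcal{A}$ weakly in $H$, which forces $v_{n_k}(t_{n_k})\in\mathcal{O}_w(\mathcal{A})$ for large $k$, a contradiction. For minimality, let $\mathcal{A}'$ be any bounded, weakly closed set attracting sections of bounded subsets of $\mathcal{K}^+$ weakly in $H$. Given $a\in\mathcal{A}$, write $a=u(0)$ with $u\in\mathfrak{U}$. By strict invariance, for every $t\geq 0$ there is $u_t\in\mathfrak{U}$ with $(T(t)u_t)(s)=u_t(s+t)=u(s)$; in particular $u_t(t)=a$. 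Since $\{u_t:t\geq 0\}\subset\mathfrak{U}$ is bounded in $\mathcal{F}^b_+$, the attracting property of $\mathcal{A}'$ yields, for every weak neighborhood $\mathcal{O}_w(\mathcal{A}')$, a time $\tau$ after which $u_t(t)=a\in\mathcal{O}_w(\mathcal{A}')$. Since $\mathcal{A}'$ is weakly closed, $a\in\mathcal{A}'$.

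The argument is essentially a standard Chepyzhov--Vishik passage from the trajectory attractor to a weak global attractor, so I do not expect a genuine analytic obstacle. The main point requiring care is the logical bookkeeping in the minimality argument: one must use the backward-in-time preimages provided by strict invariance of $\mathfrak{U}$ to realize each point of $\mathcal{A}$ as $u_t(t)$ for trajectories lying in a common bounded family, so that the attraction of $\mathcal{A}'$ can be applied. All other steps reduce to the fact that the $\Theta^{loc}_+$ topology is stronger than the weak topology of $H$ at each fixed time.
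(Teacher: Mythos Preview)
Your proof is correct and follows the same overall Chepyzhov--Vishik scheme as the paper. The verification of boundedness, weak closedness, and weak attraction of $\mathcal{A}=\mathfrak{U}(0)$ is essentially identical to the paper's, modulo your choice of a sequential contradiction argument for attraction where the paper argues directly from continuity of the evaluation-at-zero map from $\Theta^{loc}_+$ to the weak topology of $H$.

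The one place where the route genuinely differs is minimality. You fix a point $a=u(0)\in\mathcal{A}$ and, using strict invariance, construct for each $t\geq 0$ a preimage trajectory $u_t\in\mathfrak{U}$ with $u_t(t)=a$, then apply the attraction property of $\mathcal{A}'$ to the bounded family $\{u_t\}_{t\geq 0}\subset\mathfrak{U}$. The paper is more direct: it simply takes $\mathfrak{B}=\mathfrak{U}$ and observes that strict invariance $T(t)\mathfrak{U}=\mathfrak{U}$ gives the identity of sections $\mathfrak{U}(t)=\mathfrak{U}(0)=\mathcal{A}$ for every $t\geq 0$, so that $\mathcal{A}=\mathfrak{U}(t)\subset\mathcal{O}_w(\bar{\mathcal{A}})$ for all large $t$ and hence for every weak neighborhood of $\bar{\mathcal{A}}$. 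Your pointwise construction of preimages is correct but unnecessary; the setwise identity $\mathfrak{U}(t)=\mathcal{A}$ already delivers the conclusion in one line. Both arguments ultimately rely on weak closedness of the competing set to pass from ``contained in every weak neighborhood'' to ``contained in''.
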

\begin{proof} The argument is standard in the theory of trajectory attractors and follows the lines of the proof of Assertion 5.3 in \cite{chep-vish-2011}. Since $\mathfrak{U}$ is bounded in $\C{F}^b_+$ and thus in $\C{F}(0,1)$, by (\ref{eq:lions-magenes}) we deduce that
$\mathfrak{U}(0)$ is bounded in $H$. Moreover, since the topology $\Theta^{loc}_+$ is stronger that the topology of $C_w([0,T];H)$ it follows that if $u_n\to u$ in the topology $\Theta^{loc}_+$ then $u_n(0)\to u(0)$ weakly in $H$. From the fact that $\mathfrak{U}$ is compact it follows that $\mathfrak{U}(0)$ is weakly compact in $H$.

To show $(i)$ let us take $\mathfrak{B}\subset \C{K}^+$ bounded in the norm of $\C{F}^b_+$. This set is attracted to $\mathfrak{U}$ in the topology $\Theta^{loc}_+$. Since for every sequence $u_n\to u$ in the topology $\Theta^{loc}_+$ we have $u_n(0)\to u(0)$ weakly in $H$,
it follows that $(T(t)\mathfrak{B})(0)$ is attracted to $\mathfrak{U}(0)$ in the weak topology of $H$, or, in other words, $\mathfrak{B}(t)$ is attracted to $\mathfrak{U}(0)$ in the weak topology of $H$.

To show $(ii)$ let $\bar{\C{A}}$ be an arbitrary weakly closed subset of $H$ that weakly attracts the sections of bounded sets $\mathfrak{B}\subset \C{K}^+$, that is, for any weak neighbourhood $\C{O}_w(\bar{\C{A}})$ we have $\mathfrak{B}(t)\subset \C{O}_w(\bar{\C{A}})$ for all $t\geq \tau=\tau(\C{O}_w,\mathfrak{B})$. Let us take $\mathfrak{B}=\mathfrak{U}$. We have $\mathfrak{U}(t)\subset \C{O}_w(\bar{\C{A}})$ for all $t\geq \tau=\tau(\C{O}_w)$. From the strict invariance of the trajectory attractor we obtain that $\C{A} = \mathfrak{U}(0)= \mathfrak{U}(t)\subset \C{O}_w(\bar{\C{A}})$. Since $\bar{\C{A}}$ is  weakly closed in $H$ it follows
that $\C{A}\subset \bar{\C{A}}$ and the proof is complete.
\end{proof}
\renewcommand{\theequation}{\arabic{section}.\arabic{equation}}
\setcounter{equation}{0}
\section{Conclusions} \label{conclusions}
Motivated by some control problems for fluid flow in domains with semipermeable walls and membranes appearing in the lubrication problems, we proved the existence of global in time solutions for a two-dimensional nonstationary Navier-Stokes shear flow with multivalued and nonmonotone boundary conditions on a part of the boundary of the flow domain. The problem is governed by a partial differential inclusion with a multivalued term in the form of Clarke subdifferential. Our main goal was to prove the existence of a trajectory attractor and a weak global attractor for the associated multivalued semiflow.

\addcontentsline{toc}{chapter}{Bibliographie}


\begin{thebibliography}{00}



\bibitem{aubin-cellina} {\sc  Aubin, J.-P. \& Cellina, A.} (1984)
        {\em Differential Inclusions},
        Springer, Berlin. 

\bibitem{balibrea} {\sc Balibrea,  F.,  Caraballo, T., Kloeden, P.E. \&  Valero, J.} (2010)
        {\em Recent developments in dynamical systems: three perspectives},
        I. J. Bifurcation and Chaos, {\bf 20}, 2591--2636.

\bibitem{mb-gl-cambridge-2009} {\sc Boukrouche,  M. \&  {\L}ukaszewicz, G.} (2009)
        {\em Shear flows and their attractors},
        pp. 1-27 in Partial Differential Equations and Fluid Mechanics, edited by Jos\'e L. Rodrigo and James C.
        Robinson,  London Mathematical Society Lecture Note Series (No. 364), Cambridge University Press, UK.

\bibitem{BoLuR-05} {\sc Boukrouche,  M., {\L}ukaszewicz, G. \& J. Real} (2006)
     {\em On pullback attractors for a class of two-dimensional
          turbulent shear flows},
          International Journal of Engineering Science.
          {\bf 44}, 830--844.

\bibitem{BoLu2-04}  {\sc Boukrouche, M. \&  {\L}ukaszewicz, G.} (2005)
        {\em An upper bound on the attractor dimension of a 2D turbulent
         shear flow with a free boundary condition},
         pp. 61--72 in Regularity and other aspects of the Navier-Stokes equations,
         Banach Center Publ., Vol.70, Polish Acad. Sci., Warszawa.

\bibitem{BoLu1-04}  {\sc Boukrouche, M. \& {\L}ukaszewicz, G.} (2004)
        {\em An upper bound on the attractor dimension of a 2D
        turbulent shear flow in lubrication theory},
        Nonlinear Analysis TMA, {\bf 59}, 1077--1089.

\bibitem{carl} {\sc  Carl, S., Le, V.K. \&  Motreanu, D.} (2007)
        {\em Nonsmooth Variational Problems and Their Inequalities},
        Springer, New York.

\bibitem{chep-vish-1996} {\sc  Chepyzhov, V.V. \&  Vishik, M.I.} (1996)
{\em   Trajectory attractors for reaction-diffusion systems},
Topol. Meth. Nonlin. Anal. {\bf 7}, 49--76.

\bibitem{chep-vish-1997} {\sc  Chepyzhov, V.V. \&  Vishik, M.I.} (1997)
        {\em Evolution equations and their trajectory attractors},
        J. Math. Pures. Appl. {\bf 76}, 913--964.

\bibitem{chep-vish-2002} {\sc  Chepyzhov, V.V. \&  Vishik, M.I.}  (2002)
        {\em Attractors for Equations of Mathematical Physics},
        AMS, Providence, RI.

\bibitem{Clarke} {\sc  Clarke, F.H.} (1990)
         {\em Optimization and Nonsmooth Analysis},
        SIAM, Philadelphia.

\bibitem{Coti} {\sc Coti Zelati,  M. \&  Tone, F.} (2012)
        {\em Multivalued attractors and their approximation: applications to the Navier-Stokes equations},
        Numer. Math. {\bf 122}, 421--441.

\bibitem{DMP1} {\sc  Denkowski, Z., Mig\'{o}rski, S. \& Papageorgiou, N.S.} (2003)
        {\em An Introduction to Nonlinear Analysis: Theory},
        Kluwer Academic Publishers, Boston.

\bibitem{Duv72}
        {\sc  Duvaut, G. \& Lions, J.L.} (1972)
        {\em Les In\'equations en M\'ecanique et en Physique},
        Dunod, Paris.

\bibitem{Kalita2010} {\sc Kalita, P.} (2011)
         {\em Decay of energy for second-order boundary hemivariational inequalities with coercive damping},
        Nonlinear Analysis TMA, {\bf 74}, 1164--1181.

\bibitem{Kasyanov2012} {\sc  Kasyanov, P.O., Toscano, L. \&  Zadoyanchuk, N.V.} (2012)
        {\em Long-time behaviour of solutions for autonomous evolution hemivariational inequality with
        multidimensional ''reaction-displacement'' law},
        Abstr. Appl. Anal., Article ID 450984, doi:10.1155/2012/450984.

\bibitem{Kasyanov2013} {\sc  Kasyanov, P.O., Toscano, L. \&  Zadoyanchuk, N.V.},
        {\em Regularity of weak solutions and their attractors for a parabolic feedback control problem},
        to appear in Set-Valued Var. Anal, doi:10.1007/s112 28-013-0233-8.

\bibitem{gl-2012-RWA} {\sc {\L}ukaszewicz, G.} (2013)
        {\em On the existence of the exponential attractor for a planar shear flow with Tresca's friction condition},
        Nonlinear Analysis RWA, {\bf 14}(3), 1585--1600.

\bibitem{gl-2012-DCDS}  {\sc {\L}ukaszewicz,  G.},
        {\em On global in time dynamics of a planar Bingham flow subject to a subdifferential boundary condition},
        to appear in Discrete and Continuous Dynamical Systems - A.

\bibitem{malek-necas-1996} {\sc Malek, J. \& Ne\v{c}as, J.} (1996)
        {\em A finite-dimensional attractor for three-dimensional flow of incompressible fluids},
        J. Diff. Eqns. {\bf 127}, 498--518.

\bibitem{Miettinen1999} {\sc Miettinen, M. \& Panagiotopoulos, P.D.} (1999)
        {\em On parabolic hemivariational inequalities and applications}, Nonlinear Anal. {\bf 35}, 885--915.

\bibitem{Migorski-Ochal2007} {\sc Mig\'orski, S. \& Ochal, A.} (2007)
        {\em Navier-Stokes models modeled by evolution hemivariational inequalities},
        Discrete and Continuous Dynamical Systems Supplement, 731--740.

\bibitem{Migorski-Ochal2005} {\sc Mig\'orski, S. \& Ochal, A.} (2005)
        {\em Hemivariational inequalities for stationary Navier-Stokes equations},
        J. Math. Anal. Appl. {\bf 306}, 197--217.




\bibitem{naniewicz-2004} {\sc Naniewicz,  Z.} (2004)
{\em Hemivariational inequalities governed by the p-Laplacian - Dirichlet
problem}, Control and Cybernetics, {\bf 33}, 181--210.

\bibitem{Sell}
        {\sc Sell, G.R.} (1996)
        {\em Global attractors for the three dimensional Navier-Stokes equations},
        J. Dyn. Diff. Eqs. {\bf 8}, 1--33.

\bibitem{simon-1987} {\sc Simon, J.} (1987)
        {\em Compact sets in the space $L^p(0,T;B)$},
        Annali di Matematica Pura ed Applicata, (IV) {\bf CXLVI}, 56--96.

\bibitem{millor-sofonea-telega-2010} {\sc Shillor,  M., Sofonea, M. \& Telega, J.J.} (2010)
        {\em Models and Analysis of Quasistatic Contact: Variational Methods},
        Springer-Verlag, Berlin Heidelberg.

\bibitem{temam-infty} {\sc Temam, R.} (1997)
        {\em Infinite-Dimensional Dynamical Systems in Mechanics and Physics}, 2nd. ed.,
        Springer-Verlag, New York.

\bibitem{temam-navier} {\sc Temam, R.} (1979)
        {\em Navier-Stokes Equations},
        North-Holland, Amsterdam.















\bibitem{chep-vish-2002-paper} {\sc  Vishik, M.I. \& Chepyzhov, V.V.} (2002)
        {\em Trajectory and global attractors of three-dimensional Navies-Stokes system},
        Mathematical Notes, {\bf 71}, 194--213.

\bibitem{chep-vish-2011} {\sc  Vishik, M.I. \& Chepyzhov, V.V.} (2011)
        {\em Trajectory attractors of equations of mathematical physics},
        Russian Math. Surveys, {\bf 66}, 637--731.

\bibitem{Zeidler2A}   {\sc Zeidler, E.} (1990)
        {\em Nonlinear Functional Analysis and its Applications, Vol. II/A: Linear Monotone Operators},
        Springer, New York.

\end{thebibliography}
\end{document}